\newcommand{\sA}{{\mathcal A}}
\newcommand{\sH}{{\mathcal H}}
\newcommand{\sO}{{\mathcal O}}
\newcommand{\sX}{{\mathcal X}}
\newcommand{\sY}{{\mathcal Y}}
\newcommand{\C}{{\mathbb C}}
\renewcommand{\L}{{\mathbb L}}
\newcommand{\N}{{\mathbb N}}
\renewcommand{\P}{{\mathbb P}}
\newcommand{\R}{{\mathbb R}}
\renewcommand{\S}{{\mathbb S}}
\newcommand{\gothm}{{\mathfrak m}}
\newcommand{\gothn}{{\mathfrak n}}
\newcommand{\gothp}{{\mathfrak p}}
\newcommand{\goths}{{\mathfrak s}}
\newcommand{\gothP}{{\mathfrak P}}
\newcommand{\abs}[1]{{\left|#1\right|}}
\newcommand{\an}{{\operatorname{an}}}
\newcommand{\Ann}{\operatorname{Ann}}
\newcommand{\Art}{\operatorname{Art}}
\newcommand{\ba}[1]{\overline{#1}}
\newcommand{\id}{{\rm id}}
\newcommand{\coker}{\operatorname{coker}}
\newcommand{\Gr}{{\rm Gr}}
\newcommand{\Hom}{\operatorname{Hom}}
\newcommand{\img}{\operatorname{im}}
\newcommand{\into}{{\, \hookrightarrow\,}}
\newcommand{\isom}{\cong}
\newcommand{\lt}{{\rm{lt}}}
\newcommand{\Mor}{\operatorname{Mor}}
\newcommand{\nil}{\operatorname{nil}}
\newcommand{\op}{{\rm op}}
\newcommand{\Psh}{{\rm Psh}}
\newcommand{\red}{{\operatorname{red}}}
\newcommand{\reg}{{\operatorname{reg}}}
\newcommand{\rk}{{\rm rk}}
\newcommand{\Schm}{{\rm Sch }}
\newcommand{\Sch}[1]{{\rm Sch / #1}}
\newcommand{\Set}{{\rm Set}}
\newcommand{\Spec}{\operatorname{Spec}}
\newcommand{\supp}{\operatorname{supp}}
\renewcommand{\to}[1][]{\xrightarrow{\ #1\ }}
\newcommand{\tensor}{\otimes}
\newcommand{\tOm}{\widetilde{\Omega}}
\newcommand{\vphi}{\varphi}
\newcommand{\ul}[1]{{\underline{#1}}}
\newcommand{\wlfun}[3]{{\underline{#1_{#2/#3}}}}
\newtheoremstyle{citing}
  {}
  {}
  {\itshape}
  {}
  {\bfseries}
  {\textbf{.}}
  {.5em}
  {\thmnote{#3}}
\theoremstyle{plain}
\newtheorem{Thm}[subsection]{Theorem}
\theoremstyle{definition}
\newtheorem{Cor}[subsection]{Corollary}
\newtheorem{Def}[subsection]{Definition}
\newtheorem{Ex}[subsection]{Example}
\newtheorem{Lem}[subsection]{Lemma}
\newtheorem{Prop}[subsection]{Proposition}
\numberwithin{equation}{section}
\theoremstyle{remark}
\newtheorem{Rem}[subsection]{Remark}
\theoremstyle{citing}
\newtheorem*{custom}{}}
\newcommand{\wl}{{Z/W}}
\newcommand{\wls}{{\rm wl}}
\newcommand{\pq}{{p,q}}
\title[Normal crossing singularities]{Normal crossing singularities and Hodge theory over Artin rings}
\author{Christian Lehn}
\address{Christian Lehn\\Institut Fourier\\
100 rue des maths\\38402 St Martin d'Heres\\
France}
\email{christian.lehn@ujf-grenoble.fr}
\begin{document}
\thispagestyle{empty}

\begin{abstract}
We develop a Hodge theory for relative simple normal crossing varieties over an Artinian base scheme. We introduce the notion of a mixed Hodge structure over an Artin ring, which axiomatizes the structure that is found on the cohomology of such a variety. As an application we prove that the maps between the graded pieces of the Hodge bundles have constant rank.
\end{abstract}

 
\subjclass[2010]{13D10, 14C30, 32S35.}
\keywords{deformations, local triviality, Hodge theory, normal crossings}

\maketitle

\setlength{\parindent}{0em}
\setcounter{tocdepth}{1}

\tableofcontents

\section*{Introduction}\label{sec intro}

In \cite{Fr83} the object of study were simple normal crossing varieties $Y$ with K\"ahler components. Although the article focused on deformations, there were a lot of general results proven regarding the Hodge theory of these objects. 

We consider locally trivial families of simple normal crossing varieties over an Artinian base and prove Hodge theoretic results in the style Friedman in this setting. We introduce the notions of a \emph{mixed Hodge structure} and a \emph{mixed Hodge-Weil structure}, both \emph{over a local Artin $\C$-algebra} $R$. We have

\begin{custom}[Theorem \ref{thm mixed}]
Let $Y$ be a proper, simple normal crossing $\C$-variety and let $f:\sY \to S$ be a locally trivial deformation of $Y$ over $S=\Spec R$ for an Artinian local $\C$-algebra $R$ of finite type. Then there is a mixed Hodge structure over $R$ on $H^k(Y^\an,\R)$.
\end{custom}	

Thus, by definition there are Hodge- and weight filtrations on $H^k(Y^\an,\R)\tensor~R$ such that the weight graded objects are pure Hodge structures over $R$. The associated Hodge-Weil structure is defined by the same formulas as an ordinary Hodge structure. To add the \emph{Weil} is necessary because over an Artin $\C$-algebra there is no canonical complex conjugation.

As a result of the Hodge theoretic considerations, we obtain the following result. It might be known to experts, but we did not find a proof in the literature.

\begin{custom}[Theorem \ref{thm free singular}]
Let $R$ be a local Artin $\C$-algebra with residue field $\C$,  let $S=\Spec R$ and let $Y$ be a proper, simple normal crossing $\C$-variety. 
Assume that $f:\sY \to S$ is a locally trivial deformation of $Y$ and that $g:\sX\to S$ is smooth and proper. Let $i:\sY \to \sX$ be an $S$-morphism. Then for all $p,q$ the morphism $i^*:R^qg_*\Omega^p_{\sX/S} \to R^qf_*\tOm^p_{\sY/S}$ has a free cokernel.
\end{custom}	

My interest in the subject arose from applications to deformations of Lagrangian subvarieties of symplectic manifolds, see \cite{CL11}, where this techniques are applied in the study deformations of singular Lagrangian subvarieties of symplectic manifolds. 

Let us spend some words about the structure of this article. 
In section \ref{sec defo} we recall the definition of \emph{locally trivial} deformations in the Zariski and analytic context. The theory of Weil restriction as presented in section \ref{sec weil} relates Hodge- and Hodge-Weil structures. Its exploitation in the infinitesimal setup is the main new feature of this work and its motivation is purely geometric. It is seen as an formalization of the process of regarding a complex manifold as a differentiable manifold.

Mixed Hodge structures and mixed Hodge-Weil structures over local Artin $\C$-algebras are introduced in section \ref{sec hodge weil}. In combination with commutative algebra they are essentially used in the proof of Theorem \ref{thm free singular}. Hodge structures over Artinian bases are intermediate objects between ordinary Hodge structures and variations of Hodge structures. Hodge-Weil structures are a tool to transport certain features, especially complex conjugation and Hodge decomposition, to the infinitesimal setup. 

Section \ref{sec mhs} provides a construction of a mixed Hodge structure over a local Artin $\C$-algebra $R$ on the cohomology of simple normal crossing varieties over $S=\Spec R$.

\section*{Notations and conventions}\label{subsec notation}

We denote by $k$ an algebraically closed field of characteristic zero. 
$\Set$ is the category of sets, $\Schm$ the category of schemes. For a scheme $Z$ the category of schemes over $Z$ is denoted by $\Sch{Z}$.

The term \emph{algebraic variety} will stand for a separated reduced $k$-scheme of finite type. 
A $k$-variety $Y$ of equidimension $n$ is called a \emph{normal crossing variety} if for every closed point $y\in Y$ there is an $r\in\N_0$ such that $\widehat{\sO}_{Y,y} \isom k[[ y_1,\ldots,y_{n+1}]]/(y_1\cdot\ldots\cdot y_r)$. It is called a \emph{simple normal crossing variety} if in addition every irreducible component is nonsingular. 

For a $\C$-scheme $X$ of finite type we write $X^\an$ for the associated complex space. For a quasi-coherent $\sO_X$-module $F$ we denote by $F^\an$ the associated $\sO_{X^\an}$-module $\vphi^*F$ where $\vphi:X^\an \to X$ is the canonical morphism of ringed spaces. 

\subsection*{Acknowledgements} This work is part of the author's thesis. I would like to thank my advisor Manfred Lehn for his support and his generosity in sharing insights. Moreover, I am very grateful to Duco van Straten for the subliminal conveyance of very important ideas and to Stefan M\"uller-Stach and Claire Voisin for helpful discussions.
While working on this project, I benefited from the support of the DFG through the SFB/TR 45 ``Periods, moduli spaces and arithmetic of algebraic varieties'', 
the CNRS and the Institut Fourier. 

\section{Locally trivial deformations}\label{sec defo}

We recall the definition of locally trivial deformations, for a detailed exposition see \cite{Se}.
By $\Art_k$ we denote the category of local Artinian $k$-algebras with residue field $k$. The maximal ideal of an element $R\in \Art_k$ will be denoted by $\gothm$. 

\begin{Def}\label{defo sec schemes} 
Let $X$ be a $k$-scheme or let $k=\C$ and $X$ be a complex space. The functor
\[
D_{X}:\Art_k \to \Set, \quad R\mapsto \left\{\textrm{deformations of } X \textrm{ over } S=\Spec R\right\}/\sim
\]
where $\sim$ is the relation of isomorphism, is called \emph{functor of deformations of $X$}. A deformation
\[
 \xymatrix{X \ar[d]\ar@{^(->}[r] & \sX \ar[d]\\
0 \ar[r] & S
}\]
of $X$ over $S=\Spec R$, $R\in \Art_k$, is called \emph{(Zariski resp. analytically) locally trivial}, if for every $x\in X$ there is an open neighbourhood $x \in U \subset X$ in the Zariski- resp. Euclidean topology and an $S$-isomorphism $\sX\vert_U \to[\isom] X\vert_U \times S$ restricting to the identity on the central fiber. 
\end{Def}

Recall \cite[Exc II.8.6]{Ha77} that for a regular $k$-scheme every deformation is locally trivial.

\begin{Def}\label{defo sec morphisms} 
Let $i:Y\to X$ be a morphism of algebraic $k$-schemes or complex spaces, let $R\in \Art_k$ and $S=\Spec R$, and let $I:\sY\to \sX$ be a deformation of $i$ over $S$. It is called \emph{(Zariski resp. analytically) locally trivial} if for every $x\in X$, $y\in Y$ with $i(y)=x$ there are open subsets $U\subset X$, $V\subset Y$ in the Zariski- resp. Euclidean topology with $y \in V$, $i(V)\subset U$ and an isomorphism
\[
\begin{xy}
\xymatrix{
\sX_{\vert U} \ar[dr]\ar[rr]^\isom && {X}_{\vert U} \times_k S \ar[dl]\\
&S&\\
\sY_{\vert V} \ar[uu]^{I_{\vert V}}\ar[rr]^\isom\ar[ur] && {Y}_{\vert V} \times_k S \ar[uu]_{{i}_{\vert V}\times_k \id}\ar[ul]\\
}
\end{xy}
\]
In other words, $I:\sY\to \sX$ induces the trivial deformation on $V$ and $U$. The functor
\[
D^\lt_{i}:\Art_k \to \Set, \quad R\mapsto \left\{\textrm{locally trivial deformations of } i \textrm{ over } S\right\}/\sim
\]
where $\sim$ is the relation of isomorphism, is called the \emph{functor of locally trivial deformations of $i$}. 
\end{Def}
Sometimes the focus is not on the central fiber, but on the morphism. For the sake of simplicity we introduce the following terminology.
\begin{Def}\label{definition locally trivial}
Let $R \in \Art_k$ and let $S=\Spec R$. A \emph{locally trivial scheme} is a morphism $\sX \to S$ of schemes which is a locally trivial deformation of its central fiber $X:=\sX \times_{S} \Spec R/\gothm$. 
Similarly, we say that a locally trivial scheme $\sX \to S$ is a \emph{locally trivial variety}, \emph{locally trivial simple normal crossing variety}   respectively \emph{locally trivial normal crossing variety}  if $X$ is a variety, a normal crossing variety respectively a simple normal crossing variety .
\end{Def}

\section[Weil restriction]{Weil restriction}\label{sec weil}

For our Hodge theoretical considerations we need the theory of Weil restriction as an essential tool. The foundations of this theory were laid by Grothendieck in \cite{Gro59,Gro60}. In our case it boils down to associating an $\R$-scheme $S_\wls$ with a $\C$-scheme $S$ such that the $\R$-valued points of $S_\wls$ are exactly the $\C$-valued points of $S$. Technically, this is phrased in the language of functors and representability. However, in this particular case we interpret Weil restriction simply as the algebro-geometric analogue of the process of regarding a complex manifold as a differentiable manifold.

We extend the concept of Weil restriction to modules. We are not aware that this has been done systematically before. Nevertheless, it is an elementary byproduct of the functorial treatment. We prove some comparison results between $R$-modules and their Weil restrictions.

\subsection{Weil restriction}\label{subsec weil}
Let $S\stackrel{f}{\rightarrow}Z\stackrel{p}{\rightarrow}W$ be morphisms of schemes and consider the functor
\begin{equation}\label{weil functor}
\wlfun{S}{Z}{W}:\left(\Sch{W}\right)^\op\to \Set, \quad S'\mapsto \Mor_{\Sch{Z}}(S'\times_W Z,S).
\end{equation}
In fact, we have $\Mor_{\Sch{S'\times_W Z}}(S'\times_W Z,S\times_W Z)=\Mor_{\Sch{Z}}(S'\times_W Z,S)$, which follows from the universal property of the fiber product. Therefore, the functor $\wlfun{S}{Z}{W}$ coincides with the one defined by Grothendieck in \cite[C.2, pp.12]{Gro59}. The functor $\wlfun{S}{Z}{W}$ is representable in the following cases.
\begin{enumerate}
	\item If $S\to Z$ is proper and flat and $S\to W$ is quasiprojective, this functor is representable by \cite[4.c., p.20]{Gro60} by a $W$-scheme $S_\wl$. 
	\item Suppose that $Z\to W$ is finite and locally free, i.e. finite, flat and of finite presentation, and that moreover for each $x\in W$ and each finite set of points $P\subset S\times_W k(x)$ there is an affine open $U\subset S$ containing $P$. Then $\wlfun{S}{Z}{W}$ is representable by a $W$-scheme $S_\wl$ by \cite[7.6, Thm 4]{BLR90}.
\end{enumerate}
The $W$-scheme $S_\wl$ is called the \emph{Weil restriction} of $S$.
\subsection{Properties of Weil restriction}\label{subsec prop weil}
We will collect some properties of the process of Weil restriction. If not otherwise stated, proofs are found in \cite[Ch 7.6]{BLR90}. Recall that a presheaf of sets on $\Sch{Z}$ is a functor $\left(\Sch{Z}\right)^\op \to \Set$. The category of presheaves of sets on $\Sch{Z}$ is denoted by $\Psh(Z)$. By the Yoneda embedding a $Z$-scheme $S$ may be interpreted as a prescheaf of sets on $\Sch{Z}$ via
\[
\ul{S}: \left(\Sch{Z}\right)^\op \to \Set, \quad T \mapsto \Mor_\Sch{Z}(T,S).
\]
We will not distinguish between $S$ and $\ul{S}$. Pushforward of presheaves along the morphism $p:Z\to W$ is the functor
\[
p_*:\Psh(Z) \to \Psh(W), \quad F\mapsto \left(S' \mapsto F(S'\times_W Z)\right)
\]
and it coincides with Weil restriction on the full subcategory $\Sch{Z}$, i.e. $S_\wl = p_* S$. To be represented by $S_\wl$ means that
\begin{equation}\label{weil restriction}
\Mor_{\Sch{Z}}(S'\times_W Z,S) = \Mor_{\Sch{W}}(S',S_\wl).
\end{equation}
In other words, $S\mapsto S_\wl$ is right adjoint to the pullback $S'\mapsto p^*S'=S'\times_W Z$. In particular, for a $Z$-scheme $S$ there is a canonical morphism $\eta:S_\wl \times_W Z \to S$. If $p:Z\to W$ is proper, flat and of finite presentation, then $p_*$ preserves open and closed immersions. 

We will now specialize to $Z=\Spec \C$ and $W=\Spec \R$. In this case every quasi-projective $\C$-scheme $S$ has a Weil restriction. We write $S_\wls$ instead of $S_\wl$. The functor $p_*$ sends affine schemes to affine schemes, in other words, $p_*S$ is representable by an affine scheme $S_\wls$. If $S=\Spec R$ we will write $R_\wl$ for the coordinate ring of $S_\wl$. Equation \eqref{weil restriction} in particular gives $S(\C)=S_\wl(\R)$. If $S=\Spec R$, the morphism $\eta$ from the adjointness property gives a ring homomorphism $\eta:R\to~R_\wls \otimes_\R\C$.

Let $S=\cup_i U_i$ be a covering by open affine subschemes, such that for given $t_1,t_2\in S$ there is an index $i_0$ with $t_1,t_2\in U_{i_0}$. The proof of representability in \cite[7.6, Thm 4]{BLR90} shows that under this assumption the $\left(U_i\right)_{\wls}$ will cover $S_\wls$. For $R = \C[z_1,\ldots,z_n]/(f_1,\ldots,f_k)$ we have
\begin{equation}\label{weil finite type}
R_\wls = \R[x_1,y_1,\ldots,x_n,y_n]/(g_1,h_1,\ldots,g_k,h_k)
\end{equation}
where $f_j=g_j+ih_j$ if we evaluate at $z_k=x_k+i y_k$. 

If we define $\ba{S}:=S\times_\sigma \C$ where $\sigma:\C\to \C$ is the complex conjugation then \eqref{weil restriction} tells us that there is a canonical isomorphism $\ba{S}_\wls \cong S_\wls$ and by \cite[Ch 1, 4.11.3]{Sch94} there is a canonical isomorphism $S_\wls\times_\R \C \to S\times_\C \ba{S}$ such that $\eta$ is identified with projection on the first factor. In particular, $\eta$ is faithfully flat as the projection $S\times_\C \ba{S} \to S$ is faithfully flat.
\begin{Lem}\label{lemma weil preserves local artin}
If $R$ is a local Artin $\C$-algebra with residue field $\C$, then $R_\wls$ is a local Artin $\R$-algebra with residue field $\R$.
\end{Lem}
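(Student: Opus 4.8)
The plan is to show that $R_\wls$ is an $\R$-algebra that is (a) Artinian, (b) local, and (c) has residue field $\R$. The cleanest route exploits the base-change isomorphism recalled just above the statement, namely the canonical isomorphism $R_\wls \otimes_\R \C \cong R \otimes_\C \ba{R}$, where $\ba{R}$ denotes the coordinate ring of $\ba{S}$. Since $R$ is a local Artin $\C$-algebra of finite type with residue field $\C$, so is $\ba{R}$ (complex conjugation is a ring automorphism of the base, so it preserves all these properties), and hence the tensor product $R \otimes_\C \ba{R}$ is again a local Artin $\C$-algebra of finite type with residue field $\C$: the tensor product of two finite-dimensional local $\C$-algebras with residue field $\C$ is finite-dimensional, and it is local because its maximal ideals correspond to pairs of maximal ideals of the factors, of which there is only one each. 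This takes care of the essential algebraic content on the geometric fiber.

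\textbf{First I would} establish that $R_\wls$ is Artinian. From \eqref{weil finite type} we see directly that $R_\wls$ is a finitely generated $\R$-algebra. Moreover $\dim_\R R_\wls = \dim_\C\left(R_\wls \otimes_\R \C\right) = \dim_\C\left(R \otimes_\C \ba{R}\right) = \left(\dim_\C R\right)^2 < \infty$, using the base-change isomorphism and the fact that $R$ is finite-dimensional over $\C$. A finite-dimensional $\R$-algebra is Artinian, so this step is immediate once the base change is in hand.

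\textbf{Next I would} prove locality and identify the residue field. A finite-dimensional (hence Artinian) commutative ring is a finite product of local Artin rings, indexed by its maximal ideals, so it suffices to show $R_\wls$ has a unique maximal ideal. The maximal ideals of $R_\wls$ correspond to the closed points of $S_\wls$, and faithful flatness of $\eta$ together with the identification $S_\wls \times_\R \C \cong S \times_\C \ba{S}$ pins these down: since $S = \Spec R$ has a single point (as $R$ is local Artin), the $\C$-scheme $S \times_\C \ba{S}$ also has a single point, so $S_\wls \times_\R \C$ is a local Artin $\C$-algebra, and therefore $S_\wls$ itself cannot decompose as a nontrivial product of $\R$-algebras (such a decomposition would survive base change to $\C$). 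Hence $R_\wls$ is local. For the residue field, the quotient $\kappa := R_\wls / \gothm_{R_\wls}$ is a finite field extension of $\R$, so $\kappa$ is either $\R$ or $\C$; to exclude $\C$, I would count points after base change: if $\kappa = \C$ then $\kappa \otimes_\R \C \cong \C \times \C$ would contribute two points to $S_\wls \times_\R \C$, contradicting that this scheme has a single point. Therefore $\kappa = \R$.

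\textbf{The main obstacle} I anticipate is keeping the locality argument rigorous without circularity: one must carefully separate the purely algebraic statement (finite-dimensional $\Rightarrow$ product of local rings) from the geometric count of points after extension of scalars, and verify that the base-change isomorphism $R_\wls \otimes_\R \C \cong R \otimes_\C \ba{R}$ is exactly the tool that transports the single-point property across. The residue-field computation is the subtle endpoint, since it is precisely the place where the distinction between $\R$ and $\C$ becomes visible, and it is worth giving it the explicit point-counting justification rather than an appeal to generalities.
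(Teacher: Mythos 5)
Your proof is correct, but it takes a genuinely different route from the paper's. The paper argues via the adjunction defining the Weil restriction: it computes $\Hom_\R(R_\wls,\R)=\Hom_\C(R,\C)$ and $\Hom_\R(R_\wls,\C)=\Hom_\C(R,\C\times\C)$, finds that both are singletons, and deduces locality and the residue field from this point count; Artinian-ness is then extracted separately by a commutative-algebra argument (writing $R_\wls=P/I$, using that the polynomial ring $P$ is Jacobson to identify the unique minimal prime over $I$ with the maximal ideal, and invoking primary decomposition). You instead lean entirely on the base-change isomorphism $R_\wls\otimes_\R\C\cong R\otimes_\C\ba{R}$ recalled just before the statement: the dimension count $\dim_\R R_\wls=(\dim_\C R)^2<\infty$ gives Artinian-ness at once, and locality plus the residue field descend from the locality of $R\otimes_\C\ba{R}$. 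Your route is shorter and avoids the Jacobson-ring detour, at the cost of importing the identification $S_\wls\times_\R\C\cong S\times_\C\ba{S}$ from Scheiderer, whereas the paper's argument uses only the defining adjunction and the explicit finite-type presentation \eqref{weil finite type}. One small point to make watertight: your assertion that maximal ideals of $R\otimes_\C\ba{R}$ correspond to pairs of maximal ideals of the factors is delicate for general tensor products of algebras over a field, but it is valid here because both factors have residue field equal to the algebraically closed base field $\C$, so that $\gothm\otimes\ba{R}+R\otimes\ba{\gothm}$ is a nilpotent ideal with quotient $\C$; it would be worth saying this explicitly.
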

\begin{proof}
By \eqref{weil finite type} we see that $R_\wls$ is an $\R$-algebra of finite type. A maximal ideal $\gothm \subset R_\wls$ will define a homomorphism $R_\wls\to R_\wls/\gothm = k$, where $k$ is a finite field extension of $\R$ by Hilbert's Nullstellensatz. So $k=\R$ or $\C$. By the defining property of Weil restriction we have $\Hom_\R(R_\wls,\R)=\Hom_\C(R,\C)$ and $\Hom_\R(R_\wls,\C) = \Hom_\C(R,\C\otimes_\R \C)=\Hom_\C(R,\C\times \C)$ both of which consist of one element. But the composition of the morphism $R\to\R$ with the inclusion $\R \subset \C$ is the unique morphism $R\to\C$. Thus, $R_\R$ is a local ring with unique maximal ideal $\gothm$ and residue field $\R$. As $R_\wls$ is of finite type, $R_\wls=P/I$ where $P$ is a polynomial ring and $I\subset P$ an ideal. The preimage $\gothn$ of $\gothm$ under the natural map $P\to R_\wls$ is the unique maximal ideal of $P$ containing $I$. Let $I\subset \gothp \subset \gothn$ be a minimal prime ideal containing $I$. As $P$ is a Jacobson ring by the general form of the Nullstellensatz, see \cite[Thm 4.19]{E}, the ideal $\gothp$ is the intersection of maximal ideals, so that $\gothp=\gothn$. Taking a primary decomposition of $I$ we see that $\gothn^k \subset I$ for some $k$, so $R_\wls=P/I$ is Artinian.
\end{proof}
\begin{Def}\label{def weil restriction}
Let $S$ be a $\C$-scheme, $F$ be a quasi-coherent sheaf of $\sO_S$-modules, denote by $q:S_\wls \times_\R\C \to S_\wls$ the canonical projection and let $\eta: S_\wls \times_\R\C \to S$ be as in \ref{subsec prop weil}. We define the $S_\wls$-module $$F_\wls:= q_* \eta^* F$$ and call it the \emph{Weil restriction} of $F$.
\end{Def}
If $S=\Spec R$ and $M$ is an $R$-module, then $M_\wls = M \otimes_R (R_\wls \otimes_\R\C)$ considered as an $R_\wls$ module. In the special case $M=H\otimes_\C R$ for some $\C$-vector space $H$, we find $M_\wls =H \otimes_\R R_\wls$. Weil restriction for modules has the following useful property.
\begin{Lem}\label{lem weil restriction}
The functor $F\mapsto F_\wls$ is faithfully exact, i.e. the sequence $K' \to K \to K''$ is exact if and only if $K_\wls' \to K_\wls \to K_\wls''$ is exact.
\end{Lem}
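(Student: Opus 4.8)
The plan is to recognise the module-level Weil restriction as a composition of two elementary functors, each faithfully exact in its own right, and then to conclude by composing. The canonical morphisms assemble into $S_\wls \xleftarrow{\ q\ } S_\wls\times_\R\C \xrightarrow{\ \eta\ } S$, and Definition \ref{def weil restriction} reads $(-)_\wls = q_*\circ\eta^*$. Since exactness of a sequence of quasi-coherent sheaves can be tested locally on $S$ and on $S_\wls$, and since in the relevant situation $S=\Spec R$ and $S_\wls=\Spec R_\wls$ are affine, I would reduce at once to the affine description recorded after Definition \ref{def weil restriction}: $M_\wls = M\otimes_R(R_\wls\otimes_\R\C)$, regarded as an $R_\wls$-module. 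In these terms $\eta^*$ is base change along the ring homomorphism $\eta:R\to R_\wls\otimes_\R\C$, while $q_*$ is restriction of scalars along $R_\wls\to R_\wls\otimes_\R\C$, $r\mapsto r\otimes 1$.

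First I would treat $\eta^*$. By the discussion in \ref{subsec prop weil}, the homomorphism $\eta:R\to R_\wls\otimes_\R\C$ is faithfully flat (it is identified with the projection $S\times_\C\ba{S}\to S$, which is faithfully flat). Faithful flatness is precisely the statement that a sequence of $R$-modules is exact if and only if it remains exact after applying $-\otimes_R(R_\wls\otimes_\R\C)$; hence $\eta^*$ both preserves and reflects exactness.

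Next I would treat $q_*$. The morphism $q$ is the base change of the finite flat morphism $\Spec\C\to\Spec\R$ along $S_\wls\to\Spec\R$, so $q$ is itself finite and flat, in particular affine and surjective. Pushforward of quasi-coherent sheaves along an affine morphism is exact, and affine-locally $q_*$ is just restriction of scalars along $R_\wls\to R_\wls\otimes_\R\C$, which leaves the underlying abelian groups and maps untouched and is therefore exact and faithful. A faithful exact functor between abelian categories automatically reflects exactness: if it sends a complex to an exact sequence, then the image of the homology object vanishes, and faithfulness forces the homology object itself to be zero. Thus $q_*$ is faithfully exact as well.

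Finally I would assemble the two halves: a composition of exact functors is exact, and a composition of exactness-reflecting functors reflects exactness, so $(-)_\wls = q_*\circ\eta^*$ preserves and reflects exactness, which is exactly the asserted equivalence. I do not expect a genuine obstacle here beyond the bookkeeping of the reduction to the affine case; the one point deserving attention — and the reason the lemma demands the full ``if and only if'' — is that \emph{both} factors must \emph{reflect} exactness and not merely preserve it. For $\eta^*$ this is the gap between flatness and faithful flatness, which is already bridged in \ref{subsec prop weil}, and for $q_*$ it is supplied by the faithfulness of restriction of scalars.
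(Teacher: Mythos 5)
Your proof is correct and follows essentially the same route as the paper: both decompose $(-)_\wls = q_*\circ\eta^*$, invoke the faithful flatness of $\eta$ recorded at the end of \ref{subsec prop weil} for $\eta^*$, and use the affineness of $q$ (restriction of scalars) for $q_*$. Your write-up merely spells out the faithfulness of $q_*$ and the formal reflection-of-exactness argument, which the paper leaves implicit.
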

\begin{proof}
The morphism $\eta$ is faithfully flat as noted at the end of section \ref{subsec prop weil}. Therefore, $\eta^*$ is faithfully exact. Also $q_*$ is faithfully exact, as $q$ is affine.
\end{proof}
\begin{Lem}\label{lemma coker}
Let $(R,\gothm)$ be a local Artin $\C$-algebra and $F$ be a finitely generated $R$-module. Then $F$ is a free $R$-module if and only if $F_\wls$ is a free $R_\wls$-module.
\end{Lem}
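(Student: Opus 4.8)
The plan is to test freeness through a minimal presentation and to transport that presentation across Weil restriction by means of the faithful exactness established in Lemma~\ref{lem weil restriction}. Over the local Artin ring $(R,\gothm)$ a finitely generated module $F$ is free exactly when, for a surjection $\pi\colon R^\mu \onto F$ with $\mu=\dim_\C(F/\gothm F)$ the minimal number of generators, the kernel $K:=\ker\pi$ vanishes: minimality forces $K\subset\gothm R^\mu$, and $K=0$ is equivalent to $\pi$ being an isomorphism, hence to $F$ being free of rank $\mu$. The strategy is therefore to show that $K=0$ if and only if $K_\wls=0$, and that the latter is controlled by the freeness of $F_\wls$.

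First I would fix such a minimal presentation $0\to K\to R^\mu\to F\to 0$ and apply the Weil restriction functor. By Lemma~\ref{lem weil restriction} the resulting sequence $0\to K_\wls\to (R^\mu)_\wls\to F_\wls\to 0$ is again exact, and taking the two outer terms to be $0$ in that lemma shows $K=0$ if and only if $K_\wls=0$. Since $M_\wls=M\tensor_R(R_\wls\tensor_\R\C)$ is additive in $M$ and $R_\wls\tensor_\R\C\iso R_\wls^{2}$ as an $R_\wls$-module, we get $(R^\mu)_\wls\iso R_\wls^{2\mu}$, a \emph{free} $R_\wls$-module; moreover $R_\wls$ is again a local Artin ring with residue field $\R$ by Lemma~\ref{lemma weil preserves local artin}.

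The decisive point is to determine the minimal number of generators of $F_\wls$ over $R_\wls$. Reducing modulo the maximal ideal of $R_\wls$ and using $(R_\wls\tensor_\R\C)\tensor_{R_\wls}\R\iso\C$ yields $F_\wls\tensor_{R_\wls}\R\iso F\tensor_R\C=F/\gothm F$, a $\C$-vector space of dimension $\mu$ and hence an $\R$-vector space of dimension $2\mu$; so $F_\wls$ is minimally generated by $2\mu$ elements. This $\R$- versus $\C$-dimension bookkeeping, together with keeping the several module structures straight inside the tensor products, is the main technical obstacle. Once it is in place the conclusion is a length count over the Artinian ring $R_\wls$: if $F_\wls$ is free it must have rank $2\mu$, so $R_\wls^{2\mu}\onto F_\wls$ is a surjection of finite free modules of equal rank, hence an isomorphism, forcing $K_\wls=0$ and therefore $K=0$, i.e. $F$ is free. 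Conversely, if $F$ is free then $K=0$, so $K_\wls=0$ and $F_\wls\iso R_\wls^{2\mu}$ is free. This establishes the equivalence.
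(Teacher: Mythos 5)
Your proof is correct, but it takes a genuinely different (and in one respect cleaner) route than the paper's. The paper factors the functor $F\mapsto F_\wls$ into the faithfully flat base change $\eta^*\colon F\mapsto F\tensor_R R'$, $R'=R_\wls\tensor_\R\C$, followed by restriction of scalars along $R_\wls\subset R'$, and shows separately that each step detects freeness: the $\eta^*$ step by essentially your minimal-presentation argument, and the restriction-of-scalars step by a hands-on comparison of the $R'$- and $R_\wls$-module structures (lifting a $\C$-basis of $F'/\gothm_\wls F'$ to $x_1,\ldots,x_k$ and checking that $x_1,ix_1,\ldots,x_k,ix_k$ is an $R_\wls$-basis exactly when $x_1,\ldots,x_k$ is an $R'$-basis). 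You instead treat the composite functor in one stroke: faithful exactness of $(\cdot)_\wls$ (Lemma \ref{lem weil restriction}) transports the minimal presentation $0\to K\to R^\mu \to F\to 0$ to $0\to K_\wls\to R_\wls^{2\mu}\to F_\wls\to 0$, and the rank-doubling computation $\dim_\R\left(F_\wls/\gothm_\wls F_\wls\right)=2\dim_\C\left(F/\gothm F\right)$ combined with a length count over the Artinian ring $R_\wls$ forces $K_\wls=0$, hence $K=0$. What your version buys is that the only facts needed about Weil restriction are its faithful exactness and the two residue-field identifications, so the delicate juggling of the two module structures on $F_\wls$ disappears; what the paper's version buys is the slightly finer intermediate statement that freeness is already detected after the base change to $R'$. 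One small point worth making explicit in your write-up: the identification $F_\wls\tensor_{R_\wls}\R\iso F/\gothm F$ uses that the composite $R\to R'\to R'/\gothm'=\C$ coincides with the residue map of $R$, which holds because it is a $\C$-algebra homomorphism and $\gothm$ is nilpotent.
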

\begin{proof}
We will argue separately for $\eta^*$ and $q_*$. For brevity we write $(R',\gothm')$ instead of $(R_\wls\tensor_\R\C,\gothm_\wls\tensor_R\C)$. Clearly, $\eta^*F = F\tensor_R R'$ is free if $F$ is. Suppose $\eta^*F$ is free. We take a minimal set of generators for $F$ and obtain a surjection $\vphi:~R^n\to~F$ for some $n$. By Nakayama's Lemma $n=\dim_\C F\tensor_R R/\gothm$ and as $F\tensor_R R' \tensor_{R'} R'/\gothm' = F\tensor_R {R/\gothm} \tensor_{R/\gothm} R'/\gothm'$ this is the rank of $\eta^*F$. But as $\eta^*$ is faithfully exact, $\eta^*\ker\vphi=\ker \eta^*\vphi = 0$. So $\ker\vphi =0$ and $F$ is free.

Let $F'$ be an $R'$-module. If $F'$ is free as an $R'$-module, then it is free as an $R_\wls$-module, for $R'$ is free over $R_\wls$. Suppose $F'$ is free as an $R_\wls$-module. Since $F'$ is an $R'=R_\wls \tensor_\R\C$-module, the submodule $\gothm_\wls F'$ is a $\C$-vector space. Thus $\gothm_\wls F' = \gothm'F'$. If we take $x_1,\ldots,x_k \in F'$ whose residue classes modulo $\gothm_\wls$ form a $\C$-basis of $F'/\gothm_\wls F'$, then $F$ is freely generated over $R_\wls$ by $x_1, ix_1, \ldots, x_k, ix_k$. In other words, $F$ is freely generated over $R'$ by $x_1, \ldots, x_k$. So $F'$ is a free $R'$-module.
\end{proof}
\begin{Ex}
For the projective space $S=\P^1_\C$ of lines in $\C^2$ one finds that $S_\wls$ is isomorphic over $\R$ to the quadric $Q$ in $\P^3_\R$ given by $$x_1x_2 - x_0^2 - x_3^2 = 0.$$ see \cite[Example II.1.6]{mydiss}. As an illustration of this claim note that the map
\[
 Q(\R)\to \S^2 \subset \R^3,\; [x_0:x_1:x_2:x_3] \mapsto \frac{1}{x_1+x_2} (x_1-x_2, 2x_0, 2x_3)
\]
is an isomorphism so that indeed $\P^1_\C(\C) = Q(\R)$.
\end{Ex}

\section{Hodge-Weil theory}\label{sec hodge weil}
We introduce the notion of a \emph{mixed Hodge structure over $R$}, where $R$ is a local Artin $\C$-algebra with residue field $\C$. 
The cohomology with coefficients in the constant sheaf $R$ of a locally trivial simple normal crossing variety over  $\Spec R$ carries such a structure, see Theorem \ref{thm mixed}.

The purpose of this concept is to carry out Hodge theoretic arguments infinitesimally and it plays a central role in the proof of Theorem \ref{thm free singular}. As far as we know, it has not been studied before. The problem for $R\neq\C$ is that there is no analogue of the complex conjugation on the underlying $R$-module $H$. We will cure this by introducing the notion of a \emph{mixed Hodge-Weil structure over $R'$}, where $R'$ is now a local Artin $\R$-algebra with residue field $\R$. This notion is a formalization of the Weil restriction of a mixed Hodge structure over $R$ and there is canonically a complex conjugation.
\begin{Def}\label{definition mhs over r}
Let $R$ be a local Artin $\C$-algebra with residue field $\C$. A \emph{mixed Hodge structure over $R$} is a triple $\sH=(H_\R, F^\bullet, W_\bullet)$, which consists of a finite dimensional $\R$-vector space $H_\R$ and two filtrations $F^\bullet$ and $W_\bullet$ on $H:=(H_\R\otimes_\R \C)\tensor_\C R$. These are a finite decreasing filtration 
\[
H \supset \ldots \supset F^p \supset F^{p+1} \supset \ldots \supset 0
\]
and a finite increasing filtration 
\[
0 \subset \ldots \subset W_m \subset W_{m+1} \subset \ldots \subset H
\]
satisfying the following properties.
\begin{enumerate}
	\item All graded objects $\Gr_F^p \Gr_m^W H$ are free $R$-modules.
	\item The fiber $\sH\tensor_R \C=(H_\R \tensor_\R \C,F^\bullet\otimes_R \C, W_\bullet\otimes_R\C)$ over the unique point of $S=\Spec R$ is a mixed Hodge structure.
\end{enumerate}
Note that condition (1) implies that the $W_m$ and the $F^p$ are free $R$-modules. We will also call $\sH\tensor_R \C$ the \emph{central fiber} of $\sH$. In case $\sH\tensor_R \C$ is a pure Hodge structure of weight $k$, we call $\sH$ a \emph{pure Hodge structure} over $R$ of weight $k$.
\end{Def}
\begin{Def}
Let $R$ be a local Artin $\C$-Algebra and $\sH=(H_\R,F,W)$, $\sH'=(H_\R',F',W')$ be mixed Hodge structures over $R$. A \emph{morphism of mixed Hodge structures over $R$} is a linear map $f_\R:H_\R\to H_\R'$ such that the induced morphism $f=f_\R\tensor \id_R : H\to H'$ preserves both filtrations, i.e. $f(F^p)\subset {F^p}'$ and $f(W_m)\subset {W_m}'$. Here again $H=(H_\R\otimes_\R \C)\tensor_\C R$ and $H'$ is defined analogously. We will often call $f$ instead of $f_\R$ a morphism of mixed Hodge structures over $R$ when there is no danger of confusion.
\end{Def}
\begin{Rem}---
 
\begin{enumerate}
	\item If $\sH=(H_\R,F,W)$ is a pure Hodge structure of weight $k$ over an Artin ring $R$, then Nakayama's Lemma implies that $W$ is a trivial filtration, i.e. $H=W_k \supset W_{k-1} = 0$. We will therefore suppress $W$ in the notation and speak of a pure Hodge structure $\sH=(H_\R,F)$ over $R$.
	\item There is a complex conjugation $H_\R\tensor_\R\C\to H_\R\tensor_\R\C$ defined by $\ba{h\tensor \lambda}:=h\tensor\ba{\lambda}$. However this does not canonically extend to an $\R$-linear map $H\to H$, as $H$ is a tensor product over $\C$ and complex conjugation is only $\R$-linear.
\end{enumerate}
\end{Rem}
The notion of a Hodge structure over $R$ is an infinitesimal version of a variation of Hodge structures. The problem in replacing the base manifold $S$ of the variation with a local Artin ring $R$ is that $S=\Spec R = \{ \gothm \}$ and just posing the condition that the fiber over $R/\gothm$ be a mixed Hodge structure is not enough. The (pointwise) complex conjugates $\overline{F^p}$ of the Hodge filtration of a variation of Hodge structures do not in general form holomorphic vector bundles in case $S$ is a complex manifold, so there is no algebraic incarnation of $\overline{F^p}$. As a substitute we introduce the following notion.
\begin{Def}\label{definition mhws over r}
Let $R$ be a local Artin $\R$-algebra with residue field $\R$. A \emph{mixed Hodge-Weil structure} over $R$ is a triple $\sH=(H_\R, F^\bullet, W_\bullet)$, which consists of a finite dimensional $\R$-vector space $H_\R$ and two filtrations $F^\bullet$ and $W_\bullet$ on $H:=(H_\R\otimes_\R \C)\tensor_\R R$. These are a finite decreasing filtration 
\[
H \supset \ldots \supset F^p \supset F^{p+1} \supset \ldots \supset 0
\]
and a finite increasing filtration 
\[
0 \subset \ldots \subset W_m \subset W_{m+1} \subset \ldots \subset H
\]
satisfying the following properties.
\begin{enumerate}
	\item All graded objects $\Gr_F^p \Gr_m^W H$ are free $R$-modules.
	\item The fiber $\sH\tensor_R \R=(H_\R \tensor_\R \C,F^\bullet\otimes_R \R, W_\bullet\otimes_R\R)$ over the unique point of $S=\Spec R$ is a mixed Hodge structure.
\end{enumerate}
Note that as in Definition \ref{definition mhs over r}, condition (1) implies that the $W_m$ and the $F^p$ are free $R$-modules. We will also call $\sH\tensor_R \R$ the \emph{central fiber} of $\sH$. In case $\sH\tensor_R \C$ is a pure Hodge structure of weight $k$, we call $\sH$ a \emph{pure Hodge-Weil structure} over $R$ of weight $k$.
\end{Def}
\begin{Def}
Let $R$ be a local Artin $\R$-algebra with residue field $\R$ and $\sH=(H_\R,F,W)$, $\sH'=(H_\R',F',W')$ be mixed Hodge-Weil structures over $R$. A morphism of mixed Hodge-Weil structures over $R$ is a linear map $f:H_\R\to H_\R'$ such that the induced morphism $f_R=f\tensor \id_R : H\to H'$ preserves both filtrations, i.e. $f_R(F^p)\subset {F^p}'$ and $f_R(W_m)\subset {W_m}'$. Here again $H=(H_\R\otimes_\R \C)\tensor_\R R$ and $H'$ is defined analogously. We will write $f$ instead of $f_R$ when there is no danger of confusion.
\end{Def}
\begin{Rem}---
 
\begin{enumerate}
	\item As in the Hodge-case, we write $\sH=(H_\R,F)$ for a pure Hodge-Weil structure.
	\item The complex conjugation $H_\R\tensor_\R\C\to H_\R\tensor_\R\C$ extends canonically to an $\R$-linear map $H\to H$. Since morphisms of mixed Hodge-Weil structures are defined over $\R$, they are compatible with complex conjugation.
\end{enumerate}
\end{Rem}
Recall that $R\in\Art_\C$ the ring $R_\wls$ is a local Artin $\R$-algebra with residue field $\R$ by Lemma \ref{lemma weil preserves local artin}. Therefore, the statement of the following Lemma makes sense.
\begin{Lem}\label{lemma weil hodge = hodge weil}
Let $\sH=(H_\R, F^\bullet, W_\bullet)$ be a mixed Hodge structure over a local Artin $\C$-Algebra $R$. Then $\sH_\wls=\left(H_\R, F_\wls^\bullet, {\left(W_\wls\right)}_\bullet\right)$ is a mixed Hodge-Weil structure over $R_\wls$ and the central fibers of $\sH$ and $\sH_\wls$ are isomorphic as mixed Hodge structures. Moreover, the Weil restriction of a morphism of mixed Hodge structures is a morphism of mixed Hodge-Weil structures.
\end{Lem}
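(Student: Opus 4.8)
The plan is to check the two axioms of Definition~\ref{definition mhws over r} for $\sH_\wls$, to identify its central fiber with that of $\sH$, and to treat morphisms, throughout exploiting the faithful exactness of Weil restriction (Lemma~\ref{lem weil restriction}) and the freeness criterion of Lemma~\ref{lemma coker}. First I would record the shape of the underlying module: since $H=(H_\R\otimes_\R\C)\otimes_\C R$ is of the form $V\otimes_\C R$ with $V=H_\R\otimes_\R\C$, the computation following Definition~\ref{def weil restriction} gives $H_\wls = V\otimes_\R R_\wls = (H_\R\otimes_\R\C)\otimes_\R R_\wls$, which is exactly the module underlying a mixed Hodge-Weil structure over $R_\wls$ with real part $H_\R$. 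Because $(-)_\wls$ is exact it preserves the inclusions of $F^\bullet$ and $W_\bullet$, so $F_\wls^\bullet$ and $(W_\wls)_\bullet$ are again honest decreasing resp.\ increasing filtrations of $H_\wls$.

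For axiom~(1) the key point is that an exact functor commutes with intersections, sums and quotients of submodules, hence with the formation of the bigraded pieces; this yields a canonical identification
\[
\Gr_{F_\wls}^p\Gr_m^{W_\wls}H_\wls \;\cong\; \bigl(\Gr_F^p\Gr_m^W H\bigr)_\wls .
\]
By axiom~(1) for $\sH$ the module $\Gr_F^p\Gr_m^W H$ is free over $R$, and it is finitely generated since $H$ is finite dimensional over $\C$; Lemma~\ref{lemma coker} then shows its Weil restriction is free over $R_\wls$, which is axiom~(1) for $\sH_\wls$.

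For axiom~(2) and the statement on central fibers I would compute the fiber over the closed point directly. Using $F_\wls^p = F^p\otimes_R(R_\wls\otimes_\R\C)$ together with the identification $(R_\wls\otimes_\R\C)\otimes_{R_\wls}\R\cong\C$, which holds because $R_\wls$ has residue field $\R$ by Lemma~\ref{lemma weil preserves local artin}, associativity of tensor products gives
\[
F_\wls^p\otimes_{R_\wls}\R \;\cong\; F^p\otimes_R\C,
\]
and likewise $(W_\wls)_m\otimes_{R_\wls}\R\cong W_m\otimes_R\C$. As the real structure is $H_\R$ in both cases, this identifies $\sH_\wls\otimes_{R_\wls}\R$ with the central fiber $\sH\otimes_R\C$ of $\sH$, which is a mixed Hodge structure by axiom~(2) for $\sH$. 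This establishes axiom~(2) for $\sH_\wls$ and the asserted isomorphism of central fibers at once.

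Finally, for a morphism given by $f_\R:H_\R\to H_\R'$ the induced map $f=f_\R\otimes\id_R$ satisfies $f(F^p)\subset{F^p}'$ and $f(W_m)\subset{W_m}'$; applying the exact functor $(-)_\wls$ and using that $f_\wls$ is identified with $f_\R\otimes\id_{R_\wls}$ on $H_\wls=(H_\R\otimes_\R\C)\otimes_\R R_\wls$, one obtains that $f_\wls$ preserves the Weil-restricted filtrations, i.e.\ $f_\R$ is a morphism of mixed Hodge-Weil structures. I expect the only genuinely delicate step to be the interchange of Weil restriction with the bigraded functor $\Gr_F^p\Gr_m^W$ underlying axiom~(1); once this identification is justified by exactness, the freeness follows from Lemma~\ref{lemma coker} and all remaining steps are formal.
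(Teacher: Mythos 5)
Your proposal is correct and follows essentially the same route as the paper's own proof: the identification $H_\wls=(H_\R\otimes_\R\C)\otimes_\R R_\wls$, faithful exactness (Lemma \ref{lem weil restriction}) to transport the filtrations and their graded pieces, Lemma \ref{lemma coker} for freeness, reduction modulo the maximal ideal of $R_\wls$ for the central fiber, and functoriality for morphisms. No gaps; the one step you flag as delicate (commuting $\Gr_F^p\Gr_m^W$ with Weil restriction) is handled in the paper exactly as you propose, via exactness of $(-)_\wls$.
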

\begin{proof} The remark after Definition \ref{def weil restriction} tells us that
\begin{equation}\label{weil restricted mhs}
H_\wls = (H_\R \tensor_\R\C \tensor_\C R)_\wls = (H_\R \tensor_\R\C) \tensor_\R R_\wls.
\end{equation}
By Lemma \ref{lem weil restriction} we see that the $F^p_\wls$ and ${\left(W_m\right)}_\wls$ are submodules of $H_\wls=(H_\R \tensor_\R\C) \tensor_\R R_\wls$. By Lemma \ref{lemma coker} the modules $\left(\Gr_F^p \Gr_m^W H\right)_\wls$ are free and by Lemma \ref{lem weil restriction} they are the graded objects of the filtrations $F^p_\wls$ and ${\left(W_m\right)}_\wls$. Let $\gothm'$ be the maximal ideal of $R_\wls$. As $R_\wls/\gothm' = \R$ we see from \ref{weil restricted mhs} that $H_\wls\tensor_\R R_\wls/\gothm' = H_\R \tensor_\R\C$. For the same reason $F^p_\wls \tensor \R = F^p \tensor \C$ and $(W_m)_\wls \tensor \R = W_m \tensor \C$ so that $\sH_\wls\tensor \R$ is a mixed Hodge structure. The proof also shows the statement about the central fibers and the statement about morphisms is immediate from the functoriality of the Weil restriction.
\end{proof}
\begin{Lem}\label{lemma hodge decomposition}
Let $R$ be a local Artin $\R$-Algebra with residue field $\R$ and $\sH=(H_\R, F^\bullet)$ a pure Hodge-Weil structure of weight $k$. Then
\begin{align}
\label{h=f+fbar}H&=F^p\oplus \ba{F^{q+1}}, \qquad \forall p,q, p+q=k,\\
\label{h=hpq}H&=\bigoplus_{p+q=k}H^{p,q}, \qquad H^{p,q}=F^p\cap \ba{F^q} \qquad \textrm{ and}\\
\label{f=hpq} F^p&=\bigoplus_{r\geq p} H^{r,k-r}.
\end{align}
In particular, the last statement implies that the $H^{p,q}$ are free and lift the subquotients $\Gr_F^p H$ to subobjects of $H$.
\end{Lem}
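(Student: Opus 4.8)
The plan is to reduce everything to the single decomposition \eqref{h=f+fbar}, since \eqref{h=hpq} and \eqref{f=hpq} then follow by a purely formal argument that is valid over an arbitrary ring. Throughout I would write $H_0:=H_\R\tensor_\R\C$ for the central fibre and $F_0^p:=F^p\tensor_R\R\subset H_0$ for the induced Hodge filtration, which by condition (2) of Definition \ref{definition mhws over r} makes $(H_0,F_0^\bullet)$ a pure Hodge structure of weight $k$. First I would record two structural facts. Since every $\Gr_F^p H$ is free (condition (1)), an easy descending induction shows that each quotient $H/F^p$ is free, whence the sequence $0\to F^p\to H\to H/F^p\to 0$ splits; thus $F^p$ is a free direct summand of the free $R$-module $H$ and $F^p\tensor_R\R=F_0^p$, so $\rank_R F^p=\dim_\R F_0^p=2\dim_\C F_0^p$ and $\rank_R H=\dim_\R H_0=2\dim_\C H_0$. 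Secondly, the complex conjugation $c\colon H\to H$ of the Hodge--Weil structure is an $R$-linear involution (it acts on the factor $\C$ and fixes $R$), so $\ba{F^{q+1}}:=c(F^{q+1})$ is free of the same rank as $F^{q+1}$ and satisfies $\ba{F^{q+1}}\tensor_R\R=\ba{F_0^{q+1}}$.

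To prove \eqref{h=f+fbar}, I would fix $p+q=k$ and consider the $R$-linear addition map $\psi\colon F^p\oplus\ba{F^{q+1}}\to H$. Reducing modulo the maximal ideal $\gothm$ gives, by the two facts above, the map $F_0^p\oplus\ba{F_0^{q+1}}\to H_0$, which is an isomorphism precisely because $(H_0,F_0^\bullet)$ is a pure Hodge structure of weight $k$. Hence $\coker\psi\tensor_R\R=\coker(\psi\tensor_R\R)=0$, and since $\coker\psi$ is a finitely generated module over the Artinian local ring $R$, Nakayama's Lemma forces $\coker\psi=0$, i.e.\ $\psi$ is surjective. Both sides are free $R$-modules of the same rank, since $\rank_R(F^p\oplus\ba{F^{q+1}})=2(\dim_\C F_0^p+\dim_\C F_0^{q+1})=2\dim_\C H_0=\rank_R H$, the middle equality being the classical dimension count for the Hodge decomposition of $H_0$. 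As $R$ is Artinian, free modules of equal rank have equal finite length, so a surjection between them is an isomorphism; thus $\psi$ is an isomorphism and \eqref{h=f+fbar} follows.

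Granting \eqref{h=f+fbar}, I would set $H^{p,q}:=F^p\cap\ba{F^q}$ for $p+q=k$ and show $F^p=H^{p,k-p}\oplus F^{p+1}$. Applying \eqref{h=f+fbar} to the pair $(p+1,k-p-1)$ gives $H=F^{p+1}\oplus\ba{F^{k-p}}$; writing $x\in F^p$ as $x=a+b$ with $a\in F^{p+1}\subset F^p$ and $b\in\ba{F^{k-p}}$ forces $b=x-a\in F^p\cap\ba{F^{k-p}}=H^{p,k-p}$, so $F^p=F^{p+1}+H^{p,k-p}$, and the sum is direct because $F^{p+1}\cap\ba{F^{k-p}}=0$ by the same splitting. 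As the filtration is finite, descending induction on $p$ yields $F^p=\bigoplus_{r\geq p}H^{r,k-r}$, which is \eqref{f=hpq}; taking $p\ll 0$, where $F^p=H$, gives \eqref{h=hpq}. Finally, the relation $F^p=H^{p,k-p}\oplus F^{p+1}$ identifies $H^{p,k-p}$ with $\Gr_F^p H$, which is free by hypothesis, so the $H^{p,q}$ are free and lift the subquotients $\Gr_F^p H$.

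The one genuinely non-formal step is \eqref{h=f+fbar}, and the main obstacle there is that over $R$ one cannot argue by dimension counting inside $H$ as in the field case. The remedy is to split the argument into two halves drawing on different inputs: surjectivity comes from Nakayama applied to the central fibre, while injectivity is recovered from the freeness of all modules in sight together with the rank (equivalently length) equality, upgrading the surjection to an isomorphism. This in turn rests on the compatibilities $F^p\tensor_R\R=F_0^p$ and $\ba{F^{q+1}}\tensor_R\R=\ba{F_0^{q+1}}$, whose verification is exactly where the freeness of the graded pieces in condition (1) of Definition \ref{definition mhws over r} is used. Once \eqref{h=f+fbar} is available no field hypothesis re-enters, which is why the passage to the bigrading goes through verbatim over $R$.
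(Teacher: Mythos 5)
Your proof is correct and follows essentially the same route as the paper: deduce $H=F^p\oplus\ba{F^{q+1}}$ from the corresponding decomposition on the central fibre via Nakayama's Lemma, then obtain \eqref{h=hpq} and \eqref{f=hpq} by the standard formal induction writing $\alpha=\beta+\gamma$ with respect to $H=F^{p+1}\oplus\ba{F^{k-p}}$. The only difference is that you spell out the directness of the sum (freeness of $F^p$ and $\ba{F^{q+1}}$ plus a length count upgrading the Nakayama surjection to an isomorphism), a detail the paper leaves implicit in the phrase ``follows from Nakayama's Lemma''.
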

\begin{proof}
As $\sH\tensor_R\R$ is a pure Hodge structure, we have
\[
H\tensor_R\R=F^p\tensor_R\R\oplus \ba{F^{q+1}\tensor_R\R} \quad \forall p,q, p+q=k.
\]
Hence \eqref{h=f+fbar} follows from Nakayama's Lemma. Now \eqref{h=f+fbar} implies \eqref{h=hpq} just as in the case of ordinary Hodge structures. We will recall the proof. Let $\alpha \in F^p \subset H$ and write $\alpha=\beta + \gamma$ where $\beta \in F^{p+1}$, $\gamma \in \ba{F^{k-p}}$ according to $H=F^{p+1}\oplus \ba{F^{k-p}}$. Then $\gamma=\alpha-\beta \in F^p \cap \ba{F^{k-p}}=H^{p,k-p}$. This shows that $F^p=F^{p+1}\oplus H^{p,q}$, and \eqref{h=hpq} and \eqref{f=hpq} follow by induction on $p$.
\end{proof}
\begin{Lem}\label{lemma morphisms}
Let $R$ be a local Artin $\C$-Algebra, let $\sH=(H_\R,F,W)$ and $\sH'=(H_\R',F',W')$ be mixed Hodge structures over $R$ and let $f:H \to H'$ be a morphism of mixed Hodge structures over $R$. Then $f^{p,q}:=f\vert_{H^{p,q}}$ satisfies $f^\pq\left(H^{p,q}\right)\subset\left(H'\right)^{p,q}$ and $f=\sum_{p,q} f^{p,q}$. Moreover, all $f^{p,q}$ have constant rank in the sense of Definition \ref{definition constant rank}.
\end{Lem}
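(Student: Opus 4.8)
The plan is to deduce all three assertions from the Hodge decomposition of Lemma~\ref{lemma hodge decomposition} after passing to the Weil restriction, which is where the $H^{p,q}$ live and where a complex conjugation is available. First I would replace $\sH,\sH'$ by $\sH_\wls,\sH'_\wls$, which are mixed Hodge--Weil structures over $R_\wls$ by Lemma~\ref{lemma weil hodge = hodge weil}, and $f$ by $f_\wls$, a morphism of such. Writing $V:=H_\R\tensor_\R\C$ and $\phi:=f_\R\tensor_\R\id_\C\colon V\to V'$, the crucial structural observation is that $f_\wls=\phi\tensor_\R\id_{R_\wls}$ is the base change of a \emph{fixed} $\R$-linear map; in particular $\phi$ commutes with the conjugation $\ba{h\tensor\lambda}=h\tensor\ba\lambda$ on $V$. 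Since $f_\wls$ preserves $F_\wls$ and commutes with conjugation it also preserves $\ba{F_\wls}$, so $f_\wls(H^{p,q})=f_\wls(F^p_\wls\cap\ba{F^q_\wls})\subset(F')^p_\wls\cap\ba{(F')^q_\wls}=(H')^{p,q}$. As the $(H')^{p,q}$ are in direct sum (Lemma~\ref{lemma hodge decomposition}), the off-diagonal components of $f_\wls$ vanish, which is precisely the assertions $f^{p,q}(H^{p,q})\subset(H')^{p,q}$ and $f=\sum_{p,q}f^{p,q}$. For a genuinely mixed structure one replaces the pure decomposition by the associated Deligne bigrading, whose functoriality under maps respecting $F$, $W$ and conjugation gives the same two properties; alternatively one passes to the pure weight-graded pieces $\Gr_m^W$, to which Lemma~\ref{lemma hodge decomposition} applies verbatim.

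For the constant-rank statement the idea is a counting argument that plays the freeness of the \emph{total} image against the central fibre. Because $\phi$ is defined over a field, $\img f_\wls=\img\phi\tensor_\R R_\wls$ is a \emph{free} $R_\wls$-module, so its minimal number of generators $\mu(\img f_\wls)$ equals $\dim_\R\img\phi$. On the other hand the inclusions just established give $\img f_\wls=\bigoplus_{p,q}\img f^{p,q}$, whence, by additivity of $\mu$ over direct sums, $\sum_{p,q}\mu(\img f^{p,q})=\dim_\R\img\phi$. Reducing to the central fibre, $\sH_\wls\tensor_{R_\wls}\R$ is an honest mixed Hodge structure, the reduction $\bar f$ respects its bigrading, and $\sum_{p,q}\dim_\R\img\bar f^{p,q}=\dim_\R\img\bar\phi=\dim_\R\img\phi$. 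Since the right-exact surjection $\img f^{p,q}\tensor_{R_\wls}\R\onto\img\bar f^{p,q}$ forces $\mu(\img f^{p,q})\ge\dim_\R\img\bar f^{p,q}$ for every $(p,q)$, comparing the two equal sums yields equality $\mu(\img f^{p,q})=\dim_\R\img\bar f^{p,q}$ term by term. By the standard Tor criterion this equality says exactly that forming the image commutes with reduction to the central fibre, i.e. that $\coker f^{p,q}$ is free; equivalently each $\img f^{p,q}$ is a free direct summand of $(H')^{p,q}$. Thus $f^{p,q}$ has constant rank in the sense of Definition~\ref{definition constant rank}, and if that definition is phrased over $R$ rather than $R_\wls$ the conclusion is transported back across the Weil restriction by Lemma~\ref{lemma coker}.

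The main obstacle is conceptual rather than computational: $f^{p,q}$ is the restriction of the \emph{constant} map $\phi\tensor\id$ to the \emph{varying} Hodge submodule $H^{p,q}$, so a priori $\img f^{p,q}$ is only a submodule of a free module and need not be free, since over an Artin ring the intersection of two free direct summands (here $\img\phi\tensor R_\wls$ and $H^{p,q}$) need not be a summand. The point of the argument is that freeness cannot be read off a single graded piece in isolation; it emerges only after summing over all $(p,q)$, where the rigidity of the total image and the dimension bookkeeping at the central fibre conspire to make the per-piece inequality sharp. The one remaining technical point is to make the mixed case precise, namely to produce the Deligne bigrading of a mixed Hodge structure over an Artin ring and to check its compatibility with $f_\wls$; the pure case needed for the counting is already furnished by Lemma~\ref{lemma hodge decomposition}.
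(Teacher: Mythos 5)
Your proof is correct, and for the first two assertions it is the same argument as the paper's: after Weil restriction, $f_\wls$ is the base change of the fixed $\R$-linear map $f_\R$, hence commutes with conjugation and preserves both $F_\wls$ and $\ba{F_\wls}$, so it respects the decomposition of Lemma \ref{lemma hodge decomposition}. For the constant-rank claim you take a more roundabout route. The paper simply notes that $\coker f_\wls=\left(\coker f_\R\right)\tensor_\R\C\tensor_\R R_\wls$ is free, that $\coker f_\wls=\bigoplus_{p,q}\coker f^{p,q}$, and that a direct summand of a finitely generated free module over a local ring is projective, hence free; Lemma \ref{lemma constant rank} then finishes in one line. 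Your version instead works with images: the identity $\sum_{p,q}\mu(\img f^{p,q})=\dim_\R\img\phi=\sum_{p,q}\dim_\R\img\bar f^{p,q}$ together with the termwise inequality $\mu(\img f^{p,q})\geq\dim_\R\img\bar f^{p,q}$ forces equality, so $\img f^{p,q}\tensor\R\to (H')^{p,q}\tensor\R$ is injective and $\coker f^{p,q}$ is free by \cite[Cor A.6]{Se}. This is sound (and incidentally the same semicontinuity bookkeeping the paper describes informally after Proposition \ref{proposition free smooth}), but it is strictly more work than applying the direct-sum observation to the cokernel rather than the image; note also that your intermediate worry about $\img f^{p,q}$ failing to be a summand dissolves immediately, since $\img f_\wls$ is free and $\img f^{p,q}$ is a direct summand of it. One point where you are more careful than the paper: you flag that a genuinely mixed $\sH$ would require a Deligne bigrading over the Artin ring, whereas the paper's proof cites only the pure decomposition \eqref{h=hpq}; since the lemma is only applied to pure structures (Proposition \ref{proposition free smooth}), nothing is lost either way.
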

\begin{proof}
By \eqref{h=hpq} the image of $f^{p,q}$ is contained in $(H')^{p,q}$, because $f$ is defined over $\R$ and preserves the Hodge filtration. Again, as $f$ is defined over $\R$ its cokernel is $\coker f = \coker \left(f_\R : H_\R \to H_\R'\right)\tensor_\R R$, so it is free. Then $$\coker f=\bigoplus_{p,q} \coker f^{p,q}$$ implies that $\coker f^{p,q}$ is free. So the claim follows from Lemma \ref{lemma constant rank}.
\end{proof}

\section[MHS for normal crossing varieties]{Mixed Hodge structures for normal crossing varieties}\label{sec mhs}
Let $S=\Spec R$ where $R\in \Art_\C$ and let $f:\sY\to S$ be a proper, locally trivial simple normal crossing $\C$-variety. We will construct a complex $\tOm_{\sY/S}^\bullet$, which calculates the cohomology with coefficients in the constant sheaf $\ul{R}_{\sY^\an}$ on $\sY^\an$. 

Using the complex $\tOm_{\sY/S}^\bullet$ and its canonical resolution, we contruct a mixed Hodge structure over $R$ on $H^k(\sY^\an,\ul R_{\sY^\an})$.
\begin{Def}\label{definition tom}
Let $S=\Spec R$ where $R\in \Art_k$ and let $f:\sY\to S$ be a locally trivial variety in the sense of Definition \ref{definition locally trivial}. We define $\tau^k_{\sY/S} \subset \Omega^k_{\sY/S}$ to be the subsheaf of sections whose support is contained in the singular locus of $f$. 
We put $\tOm^k_{\sY/S}:=\Omega^k_{\sY/S}/\tau^k_{\sY/S}$. 
\end{Def}
If $\alpha \in \Omega_{\sY/S}^k$ vanishes on $Y^\reg$, then $d\alpha$ vanishes there as well. Therefore, $\tau^\bullet_{\sY/S} \subset \Omega_{\sY/S}^\bullet$ is a subcomplex and $\tOm_{\sY/S}^\bullet$ is a complex. Next we will show that irreducible components of a variety extend to flat subschemes on locally trivial deformations. This will take some commutative algebra.
\begin{Lem}\label{lemma annihilator} 
Let $A$ be a reduced noetherian ring and $\gothp_1,\ldots, \gothp_n$ be the pairwise distinct minimal prime ideals of $A$. Then $\Ann \gothp_j = \cap_{i\neq j}\gothp_i$ for each~$j$.
\end{Lem}
\begin{proof}
Let $A_i = A/\gothp_i$ and $\phi : A\to A_1\times\ldots\times A_n$ be the canonical map. It is injective, because $\cap_i\gothp_i = \nil(A) = 0$. Suppose $a\in \cap_{i\neq j}\gothp_i$, $b\in \gothp_j$ and write $\phi(a)=(a_1,\ldots,a_n)$ and $\phi(b)=(b_1,\ldots,b_n)$. Then $\phi(ab)=(a_1b_1,\ldots, a_nb_n)=0$ because $a_i=0$ for $i\neq j$ and $b_j=0$. But $\phi$ is injective, hence $ab=0$, in other words, $a\in \Ann\gothp_j$, so $\Ann \gothp_j \supset \cap_{i\neq j}\gothp_i$.

Let $a\in \Ann \gothp_j$. Then for every $b\in \gothp_j$ we have $0=\phi(ab)=(a_1 b_1,\ldots,a_n b_n)$ in the above notation, where $b_j=0$. As the $\gothp_i$ are minimal and pairwise distinct, $\gothp_j\backslash \gothp_k\neq \emptyset$ for every $k\neq j$. If we fix $k$ and choose $b \in \gothp_j\backslash \gothp_k$, then $b_k\neq 0$. So $a_k b_k = 0$ implies that $a_k = 0$ as $A_k$ is an integral domain, so $a\in \gothp_k$. Choosing different $b$ we see that $a \in \cap_{i\neq j}\gothp_i$ completing the proof.
\end{proof}
\begin{Lem}\label{lemma connecting map zero}
Let $A$ be a reduced noetherian ring, $\gothp \subset A$ be a minimal prime ideal and $\psi: \gothp \to A/\gothp$ be an $A$-module homomorphism. Then $\psi=0$.
\end{Lem}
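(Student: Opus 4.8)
The plan is to exploit Lemma \ref{lemma annihilator} together with the fact that $A/\gothp$ is an integral domain. Write $\gothp = \gothp_j$ among the pairwise distinct minimal primes $\gothp_1,\ldots,\gothp_n$ of $A$, and denote by $\overline{x}$ the image of $x\in A$ in $A/\gothp$. The whole argument rests on producing a single element $c\in A$ that annihilates $\gothp$ but does not lie in $\gothp$; once such a $c$ is found, $A$-linearity of $\psi$ finishes the proof at once.

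First I would locate $c$. By Lemma \ref{lemma annihilator} we have $\Ann\gothp = \cap_{i\neq j}\gothp_i$, so it suffices to show $\cap_{i\neq j}\gothp_i\not\subseteq\gothp_j$. Since $\prod_{i\neq j}\gothp_i\subseteq\cap_{i\neq j}\gothp_i$, if the intersection were contained in the prime $\gothp_j$ then so would the product, forcing $\gothp_i\subseteq\gothp_j$ for some $i\neq j$; but distinct minimal primes are incomparable, a contradiction. (When $n=1$ the statement is vacuous, as then $\gothp=\nil(A)=0$.) Hence there exists $c\in\Ann\gothp$ with $c\notin\gothp$, i.e. $\overline{c}\neq 0$ in the domain $A/\gothp$.

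Now I would conclude. Fix any $a\in\gothp$. Because $c\in\Ann\gothp$ we have $ca=0$, and $A$-linearity of $\psi$ gives $0=\psi(ca)=\overline{c}\,\psi(a)$ in $A/\gothp$. As $A/\gothp$ is an integral domain and $\overline{c}\neq 0$, this forces $\psi(a)=0$. Since $a\in\gothp$ was arbitrary, $\psi=0$.

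The only genuinely substantive point is the existence of $c\in\Ann\gothp\backslash\gothp$; everything else is formal. I expect this to be the main obstacle, and it is handled by the product-in-a-prime argument combined with the incomparability of distinct minimal primes. Since this incomparability is precisely the input already underlying Lemma \ref{lemma annihilator}, no new ingredient beyond that lemma is required.
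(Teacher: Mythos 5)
Your proof is correct and follows essentially the same route as the paper: both arguments reduce to the key fact that $\Ann\gothp\not\subseteq\gothp$ (via Lemma \ref{lemma annihilator} and the incomparability of distinct minimal primes). The only cosmetic difference is in the final step, where you apply a single element $c\in\Ann\gothp\setminus\gothp$ directly using that $A/\gothp$ is a domain, whereas the paper phrases the same point via localization at $\gothp$ and torsion-freeness of $A/\gothp$.
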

\begin{proof}
Let $\gothp, \gothp_1, \ldots, \gothp_n$ be the pairwise distinct minimal prime ideals of $A$ and $N:=\img \psi \subset A/\gothp$. We will show that $N=0$. By Lemma \ref{lemma annihilator} we have $\Ann \gothp = \cap_i \gothp_i$. So $\gothp \notin \supp(\gothp)=V(\Ann \gothp)$, for otherwise $\cap_i \gothp_i \subset \gothp$ and thus $\gothp_i\subset \gothp$ for some $i$ as $\gothp$ is prime, contradicting the fact that $\gothp\neq\gothp_i$ and $\gothp$ is minimal. Thus, $\gothp \tensor_A A_\gothp=0$ and the surjection
\[
\begin{xy}
\xymatrix{
0=\gothp \tensor_A A_\gothp \ar@{->>}[r]&  N \tensor_A A_\gothp
}
\end{xy}
\]
yields that $N_\gothp = N \tensor_A A_\gothp = 0$. Therefore, $N$ is torsion. This implies $N=0$, as it is an $A/\gothp$-submodule of the torsion-free module $A/\gothp$.
\end{proof}
\begin{Lem}\label{lemma unique flat lifting}
Let $A$ be a reduced noetherian ring, $\gothp\subset A$ a minimal prime ideal, $R\in \Art_k$ and $\gothP\subset A\tensor_k R$ an ideal such that $A\tensor_k R/\gothP$ is a flat deformation of $A/\gothp$ over $R$. Then $\gothP=\gothp\tensor R$.
\end{Lem}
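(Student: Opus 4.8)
The plan is to induct on the length $\ell(R)$ of the Artinian ring $R$, reducing the uniqueness of the flat lift to the vanishing $\Hom_A(\gothp,A/\gothp)=0$ established in Lemma \ref{lemma connecting map zero}. The base case $R=k$ is immediate, since then $A\tensor_k R = A$ and $\gothP$ is the central fibre $\gothp$ itself. For the inductive step I would choose a one-dimensional $k$-subspace $I$ of the socle of $R$; any such subspace is an ideal with $\gothm I = 0$, so $R_0 := R/I$ is an Artinian quotient of strictly smaller length and $R\onto R_0$ is a small extension. Writing $\pi:A\tensor_k R\to A\tensor_k R_0$ for the induced surjection, the quotient $(A\tensor_k R/\gothP)\tensor_R R_0 = A\tensor_k R_0/\pi(\gothP)$ is again a flat deformation of $A/\gothp$ (base change of a flat module is flat, and the central fibre is unchanged), so the inductive hypothesis gives $\pi(\gothP)=\gothp\tensor R_0$.

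The first key computation, which uses only flatness and is independent of the induction, is the identity
\[
\gothP\cap(A\tensor_k I)=\gothp\tensor_k I.
\]
To obtain it, set $B:=A\tensor_k R/\gothP$ and tensor $0\to I\to R\to R_0\to 0$ with the flat $R$-module $B$; since $\Tor_1^R(B,R_0)=0$ this yields a short exact sequence $0\to B\tensor_R I\to B\to B\tensor_R R_0\to 0$. As $\gothm I=0$ one identifies $B\tensor_R I=(B\tensor_R k)\tensor_k I=(A/\gothp)\tensor_k I$, and the submodule $B\tensor_R I\subset B$ is exactly the image of $\ker\pi=A\tensor_k I$ in $B$. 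Because $\gothm I=0$ forces the $B$-action on $B\tensor_R I$ to factor through the central fibre $A/\gothp$, the resulting surjection $A\tensor_k I\onto (A/\gothp)\tensor_k I$ is the natural reduction, whose kernel is $\gothp\tensor_k I$; on the other hand its kernel is $\gothP\cap(A\tensor_k I)$, which proves the displayed equality.

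With this in hand I would measure the difference between $\gothP$ and the trivial lift $\gothp\tensor R$ by a homomorphism. For $p\in\gothp$ the element $p\tensor 1$ lies in $\pi(\gothP)=\gothp\tensor R_0$, so I may pick $x_p\in\gothP$ with $\pi(x_p)=p\tensor 1$; then $x_p-(p\tensor 1)\in\ker\pi=A\tensor_k I$, and its class defines $\phi(p)\in(A\tensor_k I)/(\gothP\cap(A\tensor_k I))=(A/\gothp)\tensor_k I$, the last identification being the displayed equality. The crucial point is that $\phi$ is a well-defined $A$-linear map $\gothp\to(A/\gothp)\tensor_k I$: well-definedness is precisely the ambiguity of $x_p$ modulo $\gothP\cap(A\tensor_k I)=\gothp\tensor_k I$, and $A$-linearity follows by taking $x_{ap}=(a\tensor 1)x_p$ and observing that multiplication by $a\tensor 1$ on $(A/\gothp)\tensor_k I$ is multiplication by $\bar a$. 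Since $I\iso k$, this exhibits $\phi$ as an element of $\Hom_A(\gothp,A/\gothp)$, which vanishes by Lemma \ref{lemma connecting map zero}; hence $\phi=0$.

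Finally I would extract the two inclusions. From $\phi=0$ each $x_p$ can be chosen with $x_p-(p\tensor 1)\in\gothp\tensor_k I=\gothP\cap(A\tensor_k I)\subset\gothP$, whence $p\tensor 1\in\gothP$; as the elements $p\tensor 1$ generate $\gothp\tensor R$ as an ideal, this gives $\gothp\tensor R\subset\gothP$. For the reverse inclusion, any $x\in\gothP$ satisfies $\pi(x)\in\gothp\tensor R_0$, so there is $y\in\gothp\tensor R$ with $\pi(x)=\pi(y)$; then $x-y\in\gothP\cap(A\tensor_k I)=\gothp\tensor_k I\subset\gothp\tensor R$, so $x\in\gothp\tensor R$. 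Thus $\gothP=\gothp\tensor R$, completing the induction. I expect the only delicate point to be the identification of the reduction map $A\tensor_k I\onto(A/\gothp)\tensor_k I$ together with the verification that $\phi$ is genuinely $A$-linear — that is, checking that the obstruction to the two lifts agreeing is really an element of $\Hom_A(\gothp,A/\gothp)$ and not merely a homomorphism of abelian groups, for it is exactly this $A$-module structure that lets Lemma \ref{lemma connecting map zero} apply.
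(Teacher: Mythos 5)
Your proof is correct and follows essentially the same route as the paper's: an induction over Artinian quotients of $R$ in which flatness identifies $\gothP\cap\ker\pi$ with $\gothp\tensor_k I$, so that the discrepancy between $\gothP$ and the trivial lift becomes an element of $\Hom_A(\gothp,A/\gothp)$, which vanishes by Lemma \ref{lemma connecting map zero}. The only differences are bookkeeping: the paper filters by powers of $\gothm$ rather than by one-dimensional socle ideals, and closes the induction by factoring the inclusion $\gothp\tensor R/\gothm^{k+1}\into A\tensor R/\gothm^{k+1}$ through $\gothP/\gothm^{k+1}$ and invoking the local flatness criterion, where you instead verify both inclusions directly.
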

\begin{proof}
Let $\gothm \subset R$ be the maximal ideal. As $R$ is Artinian, there is $n\in \N$ such that $\gothm^{n}=0$. So we may argue inductively and assume that $\gothP/\gothm^k = \gothp\tensor R/\gothm^k\subset A\tensor R/\gothm^k$. By flatness, we obtain the commutative diagram
\begin{equation}\label{big diag}
\begin{xy}
\xymatrix@R=1.6em@C=1.35em{
& 0\ar[d] & 0\ar[d] & 0\ar[d] & \\
0 \ar[r] & \gothp \tensor \gothm^{k} /\gothm^{k+1} \ar[r] \ar[d] & A\tensor \gothm^{k}/\gothm^{k+1}  \ar[r] \ar[d] & A/\gothp \tensor \gothm^{k}/\gothm^{k+1}  \ar[r] \ar[d]^\pi & 0 \\
0 \ar[r] & \gothP /\gothm^{k+1}\ar[r] \ar[d] & A\tensor R/\gothm^{k+1} \ar[r]^(0.46)\vphi \ar[d] & A\tensor R/(\gothP +\gothm^{k+1}) \ar[r] \ar[d]^\chi & 0 \\
0 \ar[r] & \gothp \tensor R/\gothm^{k}\ar[d] \ar[r] & A\tensor R/\gothm^{k} \ar[d]\ar[r] & A/\gothp \tensor R/\gothm^{k}\ar[d] \ar[r]& 0 \\
& 0 & 0 & 0 & \\
}
\end{xy}
\end{equation}
with exact rows and columns.

If we denote the inclusion 
$\begin{xy}
\xymatrix{
\gothp \tensor R/\gothm^{k+1} \ar@{^(->}[r] & A\tensor R/\gothm^{k+1}
}
\end{xy}$
by $\psi$, then $\vphi\circ \psi$ factors as 
\[
\begin{xy}
\xymatrix@C=3em{
& A/\gothp \tensor \gothm^{k}/\gothm^{k+1} \ar[d]^\pi\\
\gothp \tensor R/\gothm^{k+1} \ar@{-->}[ru]^\exists\ar[r]_(0.4){\vphi\circ \psi} & A\tensor R/(\gothP +\gothm^{k+1})
}
\end{xy}
\]
Indeed, this can be seen as follows. Consider the commutative diagram
\[
\begin{xy}
\xymatrix@C=1.35em{
& \gothp \tensor R/\gothm^{k+1} \ar[r]^\psi \ar[d] & A\tensor R/\gothm^{k+1} \ar[r]^(0.46)\vphi \ar[d] & A\tensor R/(\gothP +\gothm^{k+1}) \ar[r] \ar[d]^\chi & 0 \\
0 \ar[r] & \gothp \tensor R/\gothm^{k} \ar[r] & A\tensor R/\gothm^{k} \ar[r] & A/\gothp \tensor R/\gothm^{k} \ar[r]& 0 \\
}
\end{xy}
\]
Then $\chi\circ \vphi\circ \psi=0$ as the bottom row is exact. Therefore, $\vphi\circ \psi$ factors through $\ker \chi$ as claimed. 

Now observe that $\gothp \tensor R/\gothm^{k+1} \to A/\gothp \tensor \gothm^{k}/\gothm^{k+1}$ is zero by Lemma \ref{lemma connecting map zero}, hence so is $\vphi \circ \psi$. Therefore $\psi$ factors through $\ker\vphi= \gothP /\gothm^{k+1}$ as
\[
\begin{xy}
\xymatrix@C=3em{
\gothp \tensor R/\gothm^{k+1} \ar@{-->}[d] \ar@{^(->}[r]^\psi & A\tensor R/\gothm^{k+1}\\
\gothP /\gothm^{k+1} \ar[ru]& 
}
\end{xy}
\]
But $\gothp \tensor R/\gothm^{k+1} \to \gothP /\gothm^{k+1}$ becomes an isomorphism after tensoring with $R/\gothm^k$, thus it is itself an isomorphism by flatness of $\gothP/\gothm^{k+1}$, see \cite[Lem A.4]{Se}.
\end{proof}
\begin{Lem}\label{lemma irreducibles}
Let $f:\sY \to S$ be a locally trivial deformation of a reduced noetherian scheme $Y$ over an Artinian base $S=\Spec R$, $R\in \Art_k$. Then the irreducible components $Y_{\alpha}$ of $Y$ lift uniquely to subschemes $\sY_\alpha \into \sY$ flat over $S$. Moreover, each $\sY_\alpha$ is a locally trivial deformation of $Y_{\alpha}$.
\end{Lem}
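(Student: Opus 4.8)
The plan is to reduce to the affine, trivial situation already handled by Lemma \ref{lemma unique flat lifting} and then to globalize by a gluing argument that exploits the uniqueness proved there. Since $Y$ is reduced and noetherian, it has finitely many irreducible components $Y_\alpha$, each carrying its reduced induced structure. Working affine-locally, on an open $U=\Spec A \subset Y$ the ring $A$ is reduced noetherian, and the irreducible components of $U$ that meet a fixed $Y_\alpha$ correspond to a minimal prime $\gothp_\alpha \subset A$, with $Y_\alpha \cap U = V(\gothp_\alpha) = \Spec(A/\gothp_\alpha)$.

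First I would construct the liftings locally. By local triviality I may shrink $U$ so that there is an $S$-isomorphism $\sY\vert_U \cong U\times_k S = \Spec(A\tensor_k R)$ restricting to the identity on the central fiber. Under this trivialization I set $\sY_\alpha\vert_U := V(\gothp_\alpha \tensor_k R) = \Spec((A/\gothp_\alpha)\tensor_k R)$. Since $A/\gothp_\alpha$ is a $k$-vector space, $(A/\gothp_\alpha)\tensor_k R$ is a free, hence flat, $R$-module, so $\sY_\alpha\vert_U \to S$ is flat; its central fiber is $V(\gothp_\alpha)=Y_\alpha\cap U$; and under the trivialization it is literally $(Y_\alpha\cap U)\times_k S$, i.e. the trivial deformation. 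This simultaneously establishes existence of a flat lifting over each chart and exhibits it as a locally trivial deformation of $Y_\alpha\cap U$.

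The key step is to see that these local pieces are canonical and glue. I claim the flat lifting of $Y_\alpha\cap U$ inside $\sY\vert_U$ is unique, independently of the chosen trivialization: if $Z_1,Z_2\subset \sY\vert_U$ are two closed subschemes, flat over $S$, with central fiber $Y_\alpha\cap U$, then any trivialization identifies them with ideals $\gothP_1,\gothP_2 \subset A\tensor_k R$ for which $(A\tensor_k R)/\gothP_i$ is a flat deformation of $A/\gothp_\alpha$; Lemma \ref{lemma unique flat lifting} then forces $\gothP_1=\gothp_\alpha\tensor R=\gothP_2$, whence $Z_1=Z_2$. Consequently, for two charts $U,U'$ the subschemes $\sY_\alpha\vert_U$ and $\sY_\alpha\vert_{U'}$ are both the unique flat lifting over $U\cap U'$ (checked after passing to a common affine refinement), so they agree there. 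The local pieces therefore glue to a closed subscheme $\sY_\alpha \into \sY$, flat over $S$, which is the required lifting; global uniqueness of $\sY_\alpha$ follows from the local uniqueness just proved, and local triviality of $\sY_\alpha\to S$ has already been read off the charts.

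I expect the main obstacle to be the bookkeeping in the gluing step, specifically arguing that local uniqueness is \emph{intrinsic}, i.e. independent of the trivialization used on each chart; this is exactly what lets the a priori trivialization-dependent local subschemes be recognized as restrictions of a single global object. Once that intrinsic uniqueness is isolated, existence is immediate from local triviality together with the flatness of $(A/\gothp_\alpha)\tensor_k R$, and the remaining assertions are formal.
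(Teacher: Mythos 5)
Your proposal is correct and follows essentially the same route as the paper: cover $Y$ by trivializing affine charts, define the local lifting as $V(\gothp_\alpha\tensor_k R)$ under each trivialization, and invoke Lemma \ref{lemma unique flat lifting} to get trivialization-independent uniqueness, which yields both the gluing on overlaps and the global uniqueness. Your explicit isolation of the ``intrinsic uniqueness'' point is a slightly more careful articulation of the same gluing step the paper performs.
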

\begin{proof}
Let $Y = \cup_i U_i$ be an open affine covering of $Y$ such that there are $R$-algebra isomorphisms $\theta_i:A_i\otimes_k R \to \Gamma(U_i, \sO_\sY)$ where $A_i:=\Gamma(U_i,\sO_{Y})$. An irreducible component $Y_\alpha$ of $Y$ gives a minimal prime ideal $\gothp_\alpha^i$ in each $A_i$. We define $\sY_\alpha^i$ to be the closed subscheme of $\sY\vert_U$ whose ideal is $\theta_i(\gothp_\alpha)$. Then $\sY_\alpha^i$ is a flat lifting of $Y_\alpha\vert_{U_i}$ for all $i$. Therefore, on $U_{ij}:=U_i\cap U_j$ also  $\sY_\alpha^j\vert_{U_{ij}}$ is a flat lifting of $Y_\alpha\vert_{U_{ij}}$ for all $j$. Then by Lemma \ref{lemma unique flat lifting} we conclude that $\sY_\alpha^i\vert_{U_{ij}}=\sY_\alpha^j\vert_{U_{ij}}$ and so the $\sY_\alpha^i$ are the restrictions of a closed subscheme $\sY_\alpha$ of $\sY$. The argument also shows that $\sY_\alpha$ is unique.
\end{proof}
 
\subsection{Semi-simplicial resolutions}\label{subsec ssres}
Recall that a semi-simplicial scheme $Y^{\bullet}$ is given by schemes $Y^n$ and morphisms $d^j:Y^n\to Y^{n-1}$ for $j=0, \ldots, n$ satisfying some compatibility condition. We refer to \cite[5.1]{PS} for details.

An ordinary scheme $Y$ may be considered as a trivial semi-simplicial scheme with $Y^n=Y$ and all $d^j=\id_{Y}$.
A morphism of semi-simplicial schemes $a:Y^\bullet \to Y$ from $Y^\bullet $ to an ordinary scheme is also called an \emph{augmentation} of $Y^\bullet$ to $Y$ or that $Y^\bullet$ is \emph{augmented} towards $Y$.
We will also write an augmented semi-simplicial scheme $Y^\bullet\to Y$ in the form
\[
\begin{xy}
\xymatrix@C=1em{
\ldots  \ar[r]\ar@<0.5ex>[r]\ar@<-0.5ex>[r] & Y^{1} \ar@<0.5ex>[r]\ar@<-0.5ex>[r] & Y^{0} \ar[r]& Y \\
}
\end{xy}.
\]
Dual to the notion of an semi-simplicial object is the one of a semi-cosimplicial object.
\begin{Def}\label{def ss resolution}
Let $S$ be a $\C$-scheme and $\sY\to S$ be a proper scheme over $S$. A \emph{semi-simplicial resolution} of $\sY$ over $S$ is a semi-simplicial $S$-scheme $\sY^\bullet$ together with a morphism $a: \sY^\bullet \to \sY$ of semi-simplicial $S$-schemes such that all $a_k:\sY^k\to \sY$ are proper and $\sY^k\to S$ is smooth for all $k$.
\end{Def}
Note that for $S=\Spec \C$ this definition does \emph{not} coincide with Deligne's \cite{De71, De74}. Deligne defines semi-simplicial resolutions for varieties over $\C$. He requieres a resolution to be of \emph{of cohomological descent}, an extra condition which he uses to construct a functorial mixed Hodge structure on the cohomology of an algebraic $\C$-variety. We do not need this here as we proof all our Hodge theoretical statements "by hand". 

\subsection{Canonical resolution for locally trivial deformations of simple normal crossing varieties}\label{subsec canres}

Let $Y$ be a proper simple normal crossing $k$-variety and let $Y=\cup_i Y_i$ be a decomposition into irreducible components. Let $f:\sY \to S$ be a locally trivial deformation of $Y$ over $S=\Spec R$ where $R\in \Art_k$. Lemma \ref{lemma irreducibles} allows us to write 
\[
\sY = \bigcup_{i=1}^n \sY_i
\]
with flat $S$-schemes $\sY_i$. This union is a decomposition into irreducible components and $\sY_i$ is a locally trivial deformation of $Y_i$. As the $\sY_i\to S$ are flat deformations of smooth schemes, 
$\sY^{0} := \coprod_i \sY_i \to S$
is smooth as well. For a subset $I\subset [n]:=\{1, \ldots, n\}$ we put 
\begin{equation}\label{sssscheme}
\sY^I:=\bigcap_{i\in I} \sY_i, \quad \sY^{k}:=\coprod_{\left|I\right|=k+1}\sY^I.
\end{equation}

Here, by $\sY_i\cap \sY_j$ we denote the scheme $\sY_i \times_\sY \sY_j$. There exists one map $a_k:\sY^{k}\to \sY$ over $S$ and $k+1$ canonical maps $d_j:\sY^{k}\to \sY^{k-1}$ for $j=0,\ldots, k$ over $S$ coming from the $k+1$ inclusions $[k] \into [k+1]$. In other words, the collection of the $\sY^{k}$ together with the $d_j$ is a semi-simplicial $S$-scheme and the $a_k$ form an augmentation of $\sY^{\bullet}$ to $\sY$.

\begin{Lem}\label{lemma can res}
The semi-simplicial $S$-scheme $\sY^{\bullet}$ together with the augmentation $a:\sY^{\bullet}\to \sY$ is a semi-simplicial resolution of $\sY$. We call it the \emph{canonical resolution} of $\sY$ over $S$.
\end{Lem}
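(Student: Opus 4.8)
The plan is to verify the two defining conditions of a semi-simplicial resolution from Definition~\ref{def ss resolution}: that each augmentation map $a_k:\sY^k\to\sY$ is proper, and that each structure map $\sY^k\to S$ is smooth. The semi-simplicial structure together with the augmentation $a$ has already been exhibited in the construction preceding the statement, so nothing remains to be checked there. Since $\sY^k=\coprod_{|I|=k+1}\sY^I$ is a \emph{finite} disjoint union, in both cases it suffices to treat a single stratum $\sY^I=\bigcap_{i\in I}\sY_i$ and then pass to the finite coproduct.

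For properness of $a_k$, I would observe that each $\sY^I$ is, by its very definition, an iterated fiber product of the $\sY_i$ over $\sY$, hence a closed subscheme of $\sY$; the map $\sY^I\to\sY$ is therefore a closed immersion, in particular finite. A finite disjoint union of finite morphisms to $\sY$ is again finite, so $a_k$ is finite and hence proper.

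The smoothness of $\sY^I\to S$ is the heart of the matter, and here I would exploit local triviality. Fixing a point, the hypothesis that $f:\sY\to S$ is locally trivial furnishes an affine open $U\subset Y$ around its image together with an $S$-isomorphism $\sY|_U\cong Y|_U\times_k S$ restricting to the identity on the central fiber. The key point is that this given trivialization automatically respects the decomposition into components: under the isomorphism, both $\sY_i|_U$ and $Y_i|_U\times_k S$ are flat liftings of $Y_i|_U$, so by the uniqueness of flat liftings established in Lemma~\ref{lemma irreducibles} (whose commutative-algebra core is Lemma~\ref{lemma unique flat lifting}) they must coincide. Since scheme-theoretic intersection commutes with the flat base change $(-)\times_k S$, this yields $\sY^I|_U\cong Y^I|_U\times_k S$, exhibiting $\sY^I$ as a locally trivial deformation of $Y^I=\bigcap_{i\in I}Y_i$. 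Because $Y$ is a \emph{simple} normal crossing variety, every such intersection $Y^I$ is smooth over $k$; as smoothness is stable under the base change to $S$ and is local on the source, it follows that $\sY^I\to S$ is smooth, and therefore so is the coproduct $\sY^k\to S$.

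The main obstacle I anticipate is precisely this compatibility of the ambient trivialization with the components: a priori the local trivialization of $\sY$ need not visibly carry $\sY_i$ onto the product family, and the whole argument rests on forcing this via uniqueness of flat liftings over an Artinian base. A secondary, harmless technical point is that one may have to shrink $U$ so that each $Y_i\cap U$ is irreducible, i.e.\ corresponds to a single minimal prime, in order to apply Lemma~\ref{lemma unique flat lifting} directly; this is permissible since smoothness is checked locally on the source.
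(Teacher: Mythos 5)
Your proposal is correct and follows essentially the same route as the paper: reduce to an affine chart where the local trivialization of $\sY$ applies, invoke the uniqueness of flat liftings (Lemma~\ref{lemma unique flat lifting}) to see that the trivialization carries each $\sY_i$ to $Y_i\times_k S$, and conclude that $\sY^I$ is the trivial deformation of the smooth intersection $Y^I$. You additionally spell out the properness of the $a_k$, which the paper's proof leaves implicit.
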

\begin{proof}
We have to show that all $\sY^{m}\to S$ are smooth morphisms. Lemma III.1.5 tells us (or rather the choice of $\sY_i$, which was made using Lemma III.1.5) that $\sY_i$ is a flat deformation of the smooth variety $Y_i$ and therefore smooth as well. For $m\geq 1$ we use that $\sY^{m}$ is a disjoint union of schemes of the form $\sY^I =  \sY_{i_0} \times_\sY  \ldots \times_\sY \sY_{i_m}$, where $I=\{i_0, \ldots, i_m\}$ and $\abs{I}=m+1$. Moreover, smoothness is a local property, so let us assume that all schemes are affine, say
\[
\sY = \Spec\sA, \quad \sY_i = \Spec\sA_i, \quad Y = \Spec A, \quad  Y_i = \Spec A_i,
\]
where $A=\sA \tensor_R k$ and $A_i = \sA_i \tensor_R k$
for $S=\Spec R$. But all morphisms $\sY_i \to \sY$ are $S$-morphisms and $\sY_i$ respectively $\sY$ are locally trivial deformations of $Y_i$ respectively $Y$. Thus, we may assume that 
$\sA_i \isom A_i \tensor_k R$ and $\sA \isom A \tensor_k R$. Note that by Lemma \ref{lemma unique flat lifting} the trivialization $\sA \isom A \tensor_k R$ already induces an isomorphism $\sA_i \isom A_i \tensor_k R$ so that we obtain an $R$-algebra isomorphism
\[
\Gamma(\sY^I,\sO_{\sY^I}) = \sA_{i_0} \tensor_\sA \ldots \tensor_\sA \sA_{i_m} \isom \left(A_{i_0} \tensor_A \ldots \tensor_A A_{i_m}\right) \tensor_k R.
\]
The ring $A_{i_0} \tensor_A \ldots \tensor_A A_{i_m}$ is the coordinate ring of the smooth $k$-variety $Y^I:= \sY^I\times_S k = Y_{i_0} \times_Y  \ldots \times_Y Y_{i_m}$. Smoothness of $Y^I$ is immediate from the normal crossing condition. This shows that also $\sY^I$ 
 is smooth over $S=\Spec R$ completing the proof.
\end{proof}
\subsection{Semi-cosimplicial resolution for $\tOm_{\sY/S}^p$}
For $\sY$ as in section \ref{subsec canres} the semi-simplicial $S$-scheme $\sY^{\bullet}$ induces semi-cosimplicial $\sO_\sY$-modules ${a}_*\Omega^p_{\sY^{\bullet}/S}$. The formula $\delta_{n} : = \sum_{j=0}^{n+1} (-1)^j d^j$ where $d^j=d_j^*$ makes 
\[
{a}_*\Omega^p_{\sY^{\bullet}/S}:\qquad {a_0}_*\Omega^p_{\sY^{0}/S} \to[\delta_0] {a_1}_*\Omega^p_{\sY^{1}/S} \to[\delta_1] \ldots
\]
into a complex. The augmentation $a:\sY^{\bullet}\to \sY$ induces a coagumentation 
\[
\Omega_{\sY/S}\to[a_0^*] {a_0}_*\Omega^p_{\sY^{0}/S} \to[\delta_0] {a_1}_*\Omega^p_{\sY^{1}/S} \to[\delta_1] \ldots.
\]
As $\sY^{0}\to S$ is smooth, the morphism $a_0^*$ factors through $\tOm_{\sY/S}^p$ from Definition \ref{definition tom}. Clearly, the composition $\delta_0\circ a_0^*$ is zero and we obtain a complex
\begin{equation} \label{resolution}
0\to \tau^k_{\sY/S} \to \Omega_{\sY/S}^k \to {a_0}_*\Omega^k_{\sY^{0}/S} \to {a_1}_*\Omega^k_{\sY^{1}/S} \to \ldots
\end{equation}
All following theory is based on the important
\begin{Lem}\label{lemma friedman}
Let $Y$ be a simple normal crossing $\C$-variety and $f:\sY \to S$ be a locally trivial deformation of $Y$ over $S=\Spec R$ with $R \in \Art_\C$. Then
\begin{enumerate}
	\item \label{eins} The sequence \eqref{resolution} is exact and so is the sequence with $\sY$ replaced by $\sY^\an$.
	\item \label{zwei} $\tOm_{\sY^\an/S}^\bullet$ is a resolution of the constant sheaf $\ul{R}_{Y^\an}$.
	\item \label{drei} The canonical map $\left(\tOm_{\sY/S}^k\right)^\an \to \tOm_{\sY^\an/S}^k$ is an isomorphism.
	\item \label{vier} The canonical map $R^i f_*\tOm_{\sY/S}^k \to R^i f^\an_*\tOm_{\sY^\an/S}^k$ is an isomorphism.
\end{enumerate}
\end{Lem}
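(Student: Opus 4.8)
The plan is to reduce every local statement to the classical case $R=\C$ of \cite{Fr83} by means of local triviality together with the flatness of $R$ over $\C$, and then to globalize by a double complex, by exactness of analytification, and by GAGA. For (\ref{eins}), exactness of \eqref{resolution} may be checked on stalks, in both the Zariski and the Euclidean topology. On an open set on which $\sY\isom Y\times_\C S$ (Definition \ref{definition locally trivial}) we have $\Omega^k_{\sY/S}\isom \Omega^k_{Y/\C}\tensor_\C R$, $\sY^I\isom Y^I\times_\C S$, and $\tau^k_{\sY/S}\isom \tau^k_{Y/\C}\tensor_\C R$, the last because $\tau^k$ is the kernel of the pullback to the normalization $\sY^0\to\sY$ and this kernel commutes with the flat base change $\tensor_\C R$. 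Thus \eqref{resolution} is identified with the sequence for $R=\C$ tensored over $\C$ with $R$; as $R$ is a finite dimensional $\C$-vector space, $\tensor_\C R$ is exact, so it suffices to treat $R=\C$. This last case is the explicit Koszul/\v{C}ech computation on the local model $\{y_1\cdots y_r=0\}$ carried out in \cite{Fr83}, valid verbatim in both topologies. I expect this to be the main obstacle: the other three parts are formal once (\ref{eins}) is in hand, whereas (\ref{eins}) rests on the concrete analysis of the local model.

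For (\ref{zwei}) I would form the double complex $K^{p,q}:={a_q}_*\Omega^p_{\sY^{q,\an}/S}$ with horizontal differential the relative de Rham $d$ and vertical differential the \v{C}ech $\delta$. By the analytic case of (\ref{eins}) every column is a resolution of $\tOm^p_{\sY^\an/S}$, so $\operatorname{Tot}(K)\simeq \tOm^\bullet_{\sY^\an/S}$. On the other hand each $\sY^q\to S$ is smooth (Lemma \ref{lemma can res}); since smooth deformations are locally trivial, the reduction of the previous paragraph yields $\Omega^\bullet_{\sY^{q,\an}/S}\isom \Omega^\bullet_{Y^{q,\an}}\tensor_\C R$, which by the holomorphic Poincar\'e lemma resolves $\ul\C\tensor_\C R=\ul R$, and as $a_q$ is finite the exact functor ${a_q}_*$ turns this into a resolution of ${a_q}_*\ul R_{Y^q}$. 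Hence $\operatorname{Tot}(K)$ is also quasi-isomorphic to the complex $[{a_0}_*\ul R_{Y^0}\to {a_1}_*\ul R_{Y^1}\to\cdots]$, and this last complex resolves $\ul R_{Y^\an}$ because the nerve of the covering of $Y$ by its components is a full, hence contractible, simplex at every point. Chaining the quasi-isomorphisms gives $\tOm^\bullet_{\sY^\an/S}\simeq \ul R_{Y^\an}$.

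For (\ref{drei}) and (\ref{vier}) I would use that analytification $\vphi^*$ is flat, hence exact, and commutes both with relative K\"ahler differentials and with pushforward along the finite map $a_0$. Writing $\tau^k_{\sY/S}=\ker(\Omega^k_{\sY/S}\to {a_0}_*\Omega^k_{\sY^0/S})$ from (\ref{eins}) and applying $\vphi^*$, the algebraic and analytic forms of (\ref{eins}) identify $(\tau^k_{\sY/S})^\an$ with $\tau^k_{\sY^\an/S}$, so that $(\tOm^k_{\sY/S})^\an=\Omega^k_{\sY^\an/S}/\tau^k_{\sY^\an/S}=\tOm^k_{\sY^\an/S}$, which is (\ref{drei}). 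Finally $\sY$ is proper over $\C$ (as $S$ is finite over $\C$) and $\tOm^k_{\sY/S}$ is coherent, so Serre's GAGA gives $H^i(\sY,\tOm^k_{\sY/S})\isom H^i(\sY^\an,(\tOm^k_{\sY/S})^\an)$; since $S$ is a fat point these are exactly the $R$-modules $R^if_*\tOm^k_{\sY/S}$ and $R^if^\an_*(\tOm^k_{\sY/S})^\an$, and (\ref{drei}) rewrites the latter as $R^if^\an_*\tOm^k_{\sY^\an/S}$, giving (\ref{vier}).
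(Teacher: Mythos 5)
Your argument is correct and follows essentially the same route as the paper: reduce by local triviality to the trivial deformation $Y\times S$, where the whole resolution \eqref{resolution} is the pullback of Friedman's resolution along the flat (indeed, free) base change $\tensor_\C R$, then deduce (\ref{drei}) from exactness of analytification and (\ref{vier}) from the comparison theorem for proper morphisms (the paper cites \cite[Exp XII]{SGA1} where you cite GAGA, which is the same tool). Your double-complex derivation of (\ref{zwei}) merely unpacks what the paper obtains directly by pulling back the statement of \cite[Prop 1.5]{Fr83}; the only cosmetic slip is that you justify $\tau^k_{\sY/S}\isom\tau^k_{Y/\C}\tensor_\C R$ by describing $\tau^k$ as a kernel (which is part of what (\ref{eins}) asserts) rather than directly from its definition as the subsheaf of sections supported on the singular locus, from which the identification is immediate.
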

\begin{proof} 
The question is local in $\sY$, so we may assume that $\sY=Y\times S$ is the trivial deformation. Then the resolution \eqref{resolution} is simply the pullback of the analogous resolution for $Y$ along the flat morphism $Y\times S \to Y$. This implies \eqref{eins} and \eqref{zwei}, as the respective statements are true for $Y$ by \cite[Prop 1.5]{Fr83}.

We clearly have $\left(\Omega_{\sY/S}\right)^\an \isom \Omega_{\sY^\an/S}$. Now \eqref{drei} follows from \eqref{eins} because analytification is an exact functor by \cite[Exp XII, Prop 1.3.1]{SGA1} and compatible with taking the wedge product. Moreover, \eqref{drei} implies \eqref{vier} by \cite[Exp XII, Thm 4.2]{SGA1}.
\end{proof}
\begin{Rem} In \cite{Se55} several comparison theorems are proven for projective varieties over $\C$. A generalization of Serre's work to proper schemes of finite type over $\C$ is given in Raynaud's expos\'e \cite[Exp XII]{SGA1}. The references in the proof refer to generalizations of Serre's results \cite[Prop 10]{Se55} and \cite[Thm 1]{Se55}.
\end{Rem}
The following result is due to Deligne, see \cite[Thm 5.5]{De68}, for smooth morphisms $f:\sY\to S$. His proof also works in our situation. As his arguments are part of the proof of a more general statement, we reproduce them here.
\begin{Thm}[\textbf{Deligne}]\label{thm deligne}
Let $Y$ be a proper, simple normal crossing $\C$-variety, let $f:\sY \to S=\Spec R$  for $R\in\Art_\C$ be a locally trivial deformation of $Y$ over $S$ and let $S' \to S$ be a morphism, where $S'=\Spec R'$ for $R' \in \Art_\C$. Then the following holds.
\begin{enumerate}
	\item \label{spec seq} The associated spectral sequence
	\begin{equation}\label{Kf spec seq}
	E_1^{p,q}=R^qf_*\tOm_{\sY/S}^p \Rightarrow R^{p+q}f_*\tOm_{\sY/S}^\bullet = H^{p+q}(Y^\an,\ul{R}_{Y^\an})
	\end{equation}
	degenerates at $E_1$.
	\item \label{lemma deligne free} The $R$-modules $R^qf_*\tOm_{\sY/S}^p$ are free and compatible with arbitrary base change in the sense that for $\sY'=\sY\times_{S}S'$ the morphism $$R^qf_*\tOm_{\sY/S}^p\otimes_R R' \to R^qf_*\tOm_{\sY'/S'}^p$$ is an isomorphism.
\end{enumerate}
The analogous statements hold if $f:\sY \to S$ is 
replaced by a deformation $\sX \to S$ of a compact K\"ahler manifold $X$.
\end{Thm}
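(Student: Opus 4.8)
The plan is to reproduce Deligne's numerical degeneration criterion with lengths over the Artinian ring $R$ in place of dimensions over a field. Two facts serve as input. First, the absolute $E_1$-degeneration over the central fibre $Y$ (the case $R=\C$), which is classical for simple normal crossing varieties, cf. \cite{Fr83}; writing $h^{p,q}:=\dim_\C H^q(Y,\tOm^p_Y)$ and $b_n:=\dim_\C H^n(Y^\an,\C)$, it says precisely $\sum_{p+q=n}h^{p,q}=b_n$. Second, the $R$-flatness of the sheaves $\tOm^p_{\sY/S}$: on a trivializing open $U$ local triviality gives $\tOm^p_{\sY/S}\vert_U\isom\tOm^p_Y\vert_U\tensor_\C R$, and a sheaf of the form $\sG\tensor_\C R$ is $R$-flat because $\sG$ is $\C$-flat.

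First I would pin down the two ends of the estimate. By Lemma \ref{lemma friedman} the abutment $R^nf_*\tOm^\bullet_{\sY/S}$ equals $H^n(Y^\an,\ul R_{Y^\an})$; since $\ul R_{Y^\an}=\ul\C_{Y^\an}\tensor_\C R$ on the fixed space $Y^\an$, this is $H^n(Y^\an,\C)\tensor_\C R$, free of rank $b_n$, so its length is $b_n\cdot\length_R R$. For the $E_1$-terms I would invoke Grothendieck's cohomology-and-base-change theorem: as $\tOm^p_{\sY/S}$ is $R$-flat and $f$ is proper, there is a bounded complex $K^\bullet_p$ of finite free $R$-modules with $R^qf_*(\tOm^p_{\sY/S}\tensor_R M)=H^q(K^\bullet_p\tensor_R M)$ for all $R$-modules $M$. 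With $M=\C$ and $\tOm^p_{\sY/S}\tensor_R\C=\tOm^p_Y$, the universal-coefficient sequence
\[
0\to R^qf_*\tOm^p_{\sY/S}\tensor_R\C\to H^q(Y,\tOm^p_Y)\to\Tor_1^R(R^{q+1}f_*\tOm^p_{\sY/S},\C)\to 0
\]
yields $\dim_\C(R^qf_*\tOm^p_{\sY/S}\tensor_R\C)\leq h^{p,q}$, hence $\length_R R^qf_*\tOm^p_{\sY/S}\leq h^{p,q}\cdot\length_R R$ by Nakayama (a module needing $d$ generators has length $\leq d\cdot\length_R R$, with equality iff it is free).

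Then I would chain the inequalities. For the spectral sequence \eqref{Kf spec seq} one always has $\length_R R^nf_*\tOm^\bullet_{\sY/S}=\sum_{p+q=n}\length_R E_\infty^{p,q}\leq\sum_{p+q=n}\length_R E_1^{p,q}$, the $E_\infty$-terms being the graded pieces of a finite filtration on the abutment. Combined with the two ends this reads
\[
b_n\length_R R=\length_R R^nf_*\tOm^\bullet_{\sY/S}\leq\sum_{p+q=n}\length_R E_1^{p,q}\leq\sum_{p+q=n}h^{p,q}\length_R R=b_n\length_R R,
\]
so all inequalities are equalities for every $n$. Termwise this forces $\length_R E_1^{p,q}=h^{p,q}\length_R R$ together with $\dim_\C(E_1^{p,q}\tensor_R\C)=h^{p,q}$, making the Nakayama bound sharp, i.e. $R^qf_*\tOm^p_{\sY/S}$ free of rank $h^{p,q}$; this is the freeness in \eqref{lemma deligne free}. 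Equality $\sum_{p+q=n}\length_R E_\infty^{p,q}=\sum_{p+q=n}\length_R E_1^{p,q}$ for all $n$ forces $E_\infty^{p,q}=E_1^{p,q}$ for every $(p,q)$, hence all $d_r$ ($r\geq1$) vanish, which is the degeneration \eqref{spec seq}.

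Finally, base-change compatibility would follow from freeness: the displayed sequence now forces $\Tor_1^R(R^{q+1}f_*\tOm^p_{\sY/S},\C)=0$ for all $q$, so every cohomology module of $K^\bullet_p$ is free; a bounded complex of finite free modules with free cohomology over the local Artin ring $R$ splits, whence $H^q(K^\bullet_p\tensor_R R')=H^q(K^\bullet_p)\tensor_R R'$ for every $R\to R'$, which is the claimed isomorphism $R^qf_*\tOm^p_{\sY/S}\tensor_R R'\to[\isom]R^qf'_*\tOm^p_{\sY'/S'}$ (using $\tOm^p_{\sY/S}\tensor_R R'=\tOm^p_{\sY'/S'}$, which is local and hence clear from the trivialization). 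The K\"ahler case runs identically, using the analytic resolution and comparison of Lemma \ref{lemma friedman} for the smooth deformation $\sX\to S$. I expect the principal obstacle to be the cohomology-and-base-change bookkeeping: producing $K^\bullet_p$, bounding $\dim_\C(E_1^{p,q}\tensor_R\C)$ by the fibre numbers $h^{p,q}$, and arranging a single length identity to deliver degeneration and freeness simultaneously.
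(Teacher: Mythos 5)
Your proposal is correct and follows essentially the same route as the paper: both run Deligne's length-counting argument, using the $E_1$-degeneration on the central fibre from \cite{Fr83} and the $R$-flatness of $\tOm^p_{\sY/S}$ coming from local triviality, chaining $\lg_R$ of the abutment against $\sum_{p+q=n}\lg_R E_1^{p,q}$ to force equality everywhere, and deducing freeness, degeneration and base change simultaneously. The only cosmetic difference is that you unpack the cited inequality \cite[(3.5.1)]{De68} into a universal-coefficient sequence plus Nakayama, and replace the citation of \cite[(7.8.5)]{EGAIII2} for base change by an explicit splitting argument for the free complex.
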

\begin{proof} We argue as in \cite{De68}, Th\'eor\`eme 5.5 for the morphism $f:\sY\to S$. By \cite[(3.5.1)]{De68} a complex $K$ of $R$-modules satisfies
\[
\lg_R(H^n(K)) \leq \lg(R) \dim_\C (H^n(K\otimes_R^\L \C))
\]
and $H^n(K)$ is a free $R$-module if equality holds. Here $\lg$ denotes the length of a module. To apply this to the $E_1$-term of the spectral sequence \eqref{Kf spec seq} we need \cite{EGAIII2}, Th\'eor\`eme (6.10.5) saying that there is a bounded below complex $L$ of free $R$-modules and an isomorphism of $\partial$-functors $R^q f_*\left(\tOm_{\sY/S}^p \otimes f^*Q\right) \to H^q(L\otimes Q)$ in the bounded complex $Q$ of quasi-coherent $R$-modules. Here we use that $\tOm_{\sY/S}^\bullet$ is flat over $R$. Let $\bar{f}:Y \to \Spec\C$ be the restriction of $f$ to the central fiber. We will compare the spectral sequence \eqref{Kf spec seq} with the spectral sequence of $\bar{f}$. Again by \cite[(3.5.1)]{De68} we have 
\begin{equation}\begin{aligned}\label{second ineq}
\lg_R(R^qf_*\tOm_{\sY/S}^p)  &= \lg_R(H^q(L)) \\ &\leq \lg(R) \dim_\C (H^q(L\otimes_R \C))\\ &= \lg(R) \dim_\C (R^q\bar{f}_*\tOm_{Y/\C}^p)
\end{aligned}\end{equation}
and $R^qf_*\tOm_{\sY/S}^p$ is a free $R$-module if equality holds. 
We have
\begin{align*}
\lg (R^nf_*\tOm_{\sY/S}^\bullet) &\leq \sum_{p+q=n} \lg_R(R^qf_*\tOm_{\sY/S}^p)\\
& \leq \lg(R) \sum_{p+q=n}\dim_\C (R^q\bar{f}_*\tOm_{Y/\C}^p)\\
&= \lg(R)\dim_\C (R^n\bar{f}_*\tOm_{Y/\C}^\bullet),
\end{align*}
where the first inequality comes from the existence of the spectral sequence, the second inequality is \eqref{second ineq} and the last equality comes from the degeneration of the spectral sequence for $Y$, which is \cite[Prop 1.5]{Fr83}. But Lemma \ref{lemma friedman} \eqref{zwei} implies that $\lg (R^nf_*\tOm_{\sY/S}^\bullet) = \lg(R)\dim_\C (R^n\bar{f}_*\tOm_{Y/\C}^\bullet)$, so we have equality everywhere. Hence \eqref{spec seq} and the first assertion of \eqref{lemma deligne free} follows. The second assertion of \eqref{lemma deligne free} follows from the first by \cite[(7.8.5)]{EGAIII2}.

The K\"ahler case works literally as above, we only have to replace the reference to \cite[Thm 6.10.5]{EGAIII2} by \cite[Ch~3, Thm 4.1]{BS} and the reference to \cite[7.8.5]{EGAIII2} by \cite[Ch~3, Cor 3.10]{BS}. The rest of the proof of Theorem \ref{thm deligne} goes through if we note that the spectral sequence associated with $\Omega_X^\bullet$ degenerates as $X$ is a compact K\"ahler manifold.
\end{proof}
\subsection{Pure Hodge structures on smooth families}\label{subsec hodge structure smooth}
Let $f:\sY\to S$ be a smooth and proper morphism of complex spaces where $S=\Spec R$ for $R\in \Art_\C$. We are going to put a pure Hodge structure over $R$ on $H^k(Y,\ul{R}_{Y})$ where $Y=\sY^\red$. The decreasing filtration $F^p\Omega_{\sY/S}^\bullet:= \Omega_{\sY/S}^{\geq p}$ gives rise to the \emph{Hodge filtration} $F^pH^k(Y,\ul{R}_{Y})$ on $H^k(Y,\ul{R}_{Y})$, which we obtain by setting
\begin{equation}\label{hodge filtration pure}
F^p R^kf_*\Omega_{\sY/S}^\bullet := \img\left(R^k f_*F^p\Omega_{\sY/S}^\bullet\to R^k f_*\Omega_{\sY/S}^\bullet\right)
\end{equation}
and using the isomorphisms $H^k(Y,\ul{R}_{Y}) \to R^k f_*\Omega_{\sY/S}^\bullet$ from \cite[Lem 5.5.3]{De68}.
\begin{Lem}\label{lemma pure}
Let $f:\sY \to S=\Spec R$ be a smooth and proper morphism of complex spaces where $R\in \Art_\C$. Then 
\[
\sH^k(\sY):=\left(H^k(Y,\R), F^p H^k(Y,\ul{R}_{Y})\right)
\]
is a pure Hodge structure of weight $k$ over $R$, whose central fiber is the usual Hodge structure on $H^k(Y, \R)$. Moreover, the canonical morphism $R^k f_*F^p\Omega_{\sY/S}^\bullet\to R^k f_*\Omega_{\sY/S}^\bullet$ is injective, so that $R^k f_*F^p\Omega_{\sY/S}^\bullet \isom F^p H^k(Y,\ul{R}_{Y}).$ If $g:\sX\to S$ is smooth and proper, every $S$-morphism $i:\sY\to \sX$ induces a morphism $i^*:\sH^k(\sX)\to \sH^k(\sY)$ of pure Hodge structures over $R$.
\end{Lem}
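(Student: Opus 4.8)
The plan is to derive the whole statement from Theorem~\ref{thm deligne}, applied in its smooth incarnation (the Kähler case recorded at the end of that theorem), which I may invoke because $Y=\sY^\red$ is a compact Kähler manifold. It furnishes two facts about the Hodge--de Rham spectral sequence $E_1^{p,q}=R^qf_*\Omega^p_{\sY/S}\Rightarrow H^{p+q}(Y,\ul{R}_Y)$: it degenerates at $E_1$, and every term $R^qf_*\Omega^p_{\sY/S}$ is a free $R$-module compatible with base change. Degeneration gives $\Gr_F^pH^k(Y,\ul{R}_Y)=E_\infty^{p,k-p}=E_1^{p,k-p}=R^{k-p}f_*\Omega^p_{\sY/S}$, so each graded piece of the Hodge filtration is free; this is condition~(1) of Definition~\ref{definition mhs over r}. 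Since an iterated extension of free modules over a local ring is free, the $F^pH^k$ are themselves free.

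For the ``Moreover'' clause I would establish injectivity of the canonical map $\alpha_p\colon R^kf_*\Omega^{\geq p}_{\sY/S}\to R^kf_*\Omega^\bullet_{\sY/S}$ by a length computation, which is the natural idiom over the Artinian ring $R$ and mirrors the proof of Theorem~\ref{thm deligne}. The image of $\alpha_p$ is $F^pH^k$, and degeneration of the full spectral sequence yields $\lg_R(F^pH^k)=\sum_{i\geq p}\lg_R(R^{k-i}f_*\Omega^i_{\sY/S})$. On the other hand the spectral sequence of the truncated complex $\Omega^{\geq p}_{\sY/S}$ gives the inequality $\lg_R(R^kf_*\Omega^{\geq p}_{\sY/S})\leq\sum_{i\geq p}\lg_R(R^{k-i}f_*\Omega^i_{\sY/S})$. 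Combining these with the trivial bound $\lg_R(F^pH^k)\leq\lg_R(R^kf_*\Omega^{\geq p}_{\sY/S})$ forces equalities throughout, whence the surjection $\alpha_p\colon R^kf_*\Omega^{\geq p}_{\sY/S}\onto F^pH^k$ of equal-length modules is an isomorphism.

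To identify the central fibre I would use that $H^k/F^pH^k\isom\bigoplus_{j<p}\Gr_F^jH^k$ is free, so $0\to F^pH^k\to H^k\to H^k/F^pH^k\to 0$ splits and remains exact after $\tensor_R\C$; thus $F^pH^k\tensor_R\C\into H^k\tensor_R\C=H^k(Y,\C)$. By base change for the truncated complex (which follows from the freeness above together with naturality of the base-change map), the image of $F^pH^k\tensor_R\C$ is precisely $R^k\bar{f}_*\Omega^{\geq p}_{Y/\C}=F^pH^k(Y,\C)$, the classical Hodge filtration of the compact Kähler manifold $Y$. Hence $\sH^k(\sY)\tensor_R\C=(H^k(Y,\R)\tensor_\R\C,\ F^\bullet H^k(Y,\C))$ is the usual weight-$k$ Hodge structure, which is both condition~(2) and the claim about the central fibre; together with the first paragraph this shows that $\sH^k(\sY)$ is a pure Hodge structure of weight $k$ over $R$.

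Finally, an $S$-morphism $i\colon\sY\to\sX$ between smooth proper families induces a continuous map $|i|\colon Y\to X$ of the underlying central fibres, hence an $\R$-linear pullback $|i|^*\colon H^k(X,\R)\to H^k(Y,\R)$; tensoring with $R$ gives the pullback on $\ul{R}$-cohomology, which under the de Rham comparison coincides with the map induced by pullback of differential forms. As this pullback preserves the degree of forms, it carries $\Omega^{\geq p}_{\sX/S}$ into $\Omega^{\geq p}_{\sY/S}$ and therefore sends $F^pH^k(\sX)$ into $F^pH^k(\sY)$, so $|i|^*$ is a morphism of pure Hodge structures over $R$. I expect the second paragraph to be the crux: the genuine content is upgrading Deligne's $E_1$-degeneration to the strict (injective, base-change compatible) behaviour of the Hodge filtration over $R$, and it is the length bookkeeping, rather than any purely formal property of spectral sequences, that delivers this.
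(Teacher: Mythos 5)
Your proof is correct and follows essentially the same route as the paper: everything is derived from Theorem~\ref{thm deligne} (degeneration at $E_1$ together with freeness and base-change compatibility of the $E_1$-terms), the graded pieces of the image filtration are identified with the free modules $R^{k-p}f_*\Omega^p_{\sY/S}$, and the central fibre and functoriality statements follow formally. The only cosmetic difference is in the ``Moreover'' step: the paper establishes $R^kf_*F^p\Omega^\bullet_{\sY/S}\isom F^pR^kf_*\Omega^\bullet_{\sY/S}$ by a descending induction on $p$ using a commutative ladder of short exact sequences, whereas you use the equivalent length bookkeeping over the Artinian ring $R$.
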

\begin{proof} The filtration defined in \eqref{hodge filtration pure} is the one, whose graded objects are found on $E_\infty$ of the spectral sequence \eqref{Kf spec seq}. By \cite[Thm 5.5]{De68} we have $E_\infty=E_1$, so $\Gr_F^p R^k f_*\Omega_{\sY/S}^\bullet = R^{k-p}f_*\Omega_{\sY/S}^p = R^{k-p}f_* \Gr_F^p \Omega_{\sY/S}^\bullet$. The same theorem tells us that $R^{k-p}f_*\Omega_{\sY/S}^p $ is free. Therefore using
\[
\begin{xy}
\xymatrix{
0 \ar[r] & R^kf_*F^{p+1} \Omega_{\sY/S}^\bullet \ar[r] \ar[d] & R^kf_*F^p \Omega_{\sY/S}^\bullet \ar[r] \ar[d] & R^kf_*\Gr_F^p \Omega_{\sY/S}^\bullet \ar[d] \ar[r] & 0 \\
0 \ar[r] & F^{p+1} R^kf_*\Omega_{\sY/S}^\bullet \ar[r] & F^{p} R^kf_*\Omega_{\sY/S}^\bullet \ar[r] & \Gr_F^p R^kf_*\Omega_{\sY/S}^\bullet \ar[r] & 0
}
\end{xy}
\]
we find inductively that $R^kf_*F^p \Omega_{\sY/S}^\bullet \isom F^p R^kf_*\Omega_{\sY/S}^\bullet$ and that these are free submodules. Again by \cite[Thm 5.5]{De68}, all graded objects are compatible with base change and therefore restrict to a pure Hodge structure on the central fiber. The statement about morphisms is clear.
\end{proof}
\begin{Cor}\label{cor pure grf}
There is a natural isomorphism $$R^{k-p} f_*\Omega_{\sY/S}^p \to \Gr_F^p H^k(Y,R).$$ 
\end{Cor}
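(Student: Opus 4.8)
The plan is to deduce this directly from the $E_1$-degeneration of the spectral sequence \eqref{Kf spec seq} established in Theorem \ref{thm deligne} \eqref{spec seq}. The assertion is in fact already implicit in the proof of Lemma \ref{lemma pure}; the present corollary only makes the resulting identification explicit and records its naturality.

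First I would recall that, under the isomorphism $H^k(Y,\ul{R}_Y) \iso R^k f_* \Omega_{\sY/S}^\bullet$ used in \eqref{hodge filtration pure}, the Hodge filtration $F^p H^k(Y,R)$ is by definition the image filtration $F^p R^k f_* \Omega_{\sY/S}^\bullet = \img\left(R^k f_* F^p \Omega_{\sY/S}^\bullet \to R^k f_* \Omega_{\sY/S}^\bullet\right)$ coming from the stupid filtration $F^p \Omega_{\sY/S}^\bullet = \Omega_{\sY/S}^{\geq p}$. This is precisely the abutment filtration of the spectral sequence \eqref{Kf spec seq} associated to the filtered complex $(\Omega_{\sY/S}^\bullet, F)$, so that $E_\infty^{p,q} = \Gr_F^p R^{p+q} f_* \Omega_{\sY/S}^\bullet$.

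Next I would invoke the degeneration. By Theorem \ref{thm deligne} \eqref{spec seq} the spectral sequence degenerates at $E_1$, whence $E_\infty^{p,q} = E_1^{p,q} = R^q f_* \Omega_{\sY/S}^p$. Setting $p+q=k$ and combining with the previous step yields the desired isomorphism
\[
R^{k-p} f_* \Omega_{\sY/S}^p = E_1^{p,k-p} = E_\infty^{p,k-p} = \Gr_F^p R^k f_* \Omega_{\sY/S}^\bullet \iso \Gr_F^p H^k(Y,R).
\]

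Finally, for naturality I would observe that the filtered complex $(\Omega_{\sY/S}^\bullet, F)$, its hypercohomology, and the associated spectral sequence are all functorial with respect to $S$-morphisms of smooth proper families; in particular, for an $S$-morphism $i:\sY \to \sX$ as in Lemma \ref{lemma pure} the pullback $i^*$ commutes with the identifications above, so the isomorphism is compatible with the induced morphism of pure Hodge structures $i^*:\sH^k(\sX) \to \sH^k(\sY)$. I do not expect a genuine obstacle here: the only point requiring care is the standard identification of the image filtration with the abutment filtration of the spectral sequence attached to the stupid filtration, which is exactly what was already used in Lemma \ref{lemma pure}.
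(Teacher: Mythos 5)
Your argument is correct, but it takes a genuinely different (more formal) route than the paper's own proof of this corollary. You deduce the statement from the general machinery of filtered complexes: the Hodge filtration \eqref{hodge filtration pure} is the abutment filtration of the spectral sequence \eqref{Kf spec seq}, its graded pieces are the $E_\infty$-terms, and $E_1$-degeneration identifies these with $E_1^{p,k-p}=R^{k-p}f_*\Omega^p_{\sY/S}$. The paper instead argues concretely with the short exact sequence of complexes $0\to\Omega^{\geq p+1}_{\sY/S}\to\Omega^{\geq p}_{\sY/S}\to\Omega^p_{\sY/S}[-p]\to 0$: the injectivity established in Lemma \ref{lemma pure} forces the induced sequence $0\to R^kf_*\Omega^{\geq p+1}_{\sY/S}\to R^kf_*\Omega^{\geq p}_{\sY/S}\to R^{k-p}f_*\Omega^{p}_{\sY/S}\to 0$ to be exact, and the five lemma applied to the comparison with $0\to F^{p+1}H^k\to F^{p}H^k\to\Gr_F^pH^k\to 0$ yields the isomorphism. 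The mathematical content is the same --- indeed the identity $\Gr_F^pR^kf_*\Omega^\bullet_{\sY/S}=R^{k-p}f_*\Omega^p_{\sY/S}$ is already asserted in the opening lines of the proof of Lemma \ref{lemma pure} by exactly your reasoning --- but the paper's version exhibits the map explicitly as the one induced on hypercohomology by $\Omega^{\geq p}_{\sY/S}\to\Omega^p_{\sY/S}[-p]$, which makes naturality in $\sY$ immediate, and it records the surjectivity of $R^kf_*\Omega^{\geq p}_{\sY/S}\to R^{k-p}f_*\Omega^p_{\sY/S}$ as a byproduct. Your version buys brevity at the cost of leaning on the standard (here unproved) identification of the image filtration with the abutment filtration; also note that the degeneration you need in this smooth setting is the one quoted from Deligne in the proof of Lemma \ref{lemma pure}, since Theorem \ref{thm deligne} as stated concerns $\tOm_{\sY/S}$ for normal crossing families and Kähler deformations. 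Neither point is a genuine gap.
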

\begin{proof} 
Consider the sequences 
\begin{equation}\label{grf iso}
\begin{xy}
\xymatrix{
0 \ar[r] & R^kf_* \Omega_{\sY/S}^{\geq p+1} \ar[r] \ar[d]^\isom & R^kf_* \Omega_{\sY/S}^{\geq p} \ar[r] \ar[d]^\isom & R^{k-p}f_* \Omega_{\sY/S}^{p} \ar[r] \ar@{-->}[d]^{\exists\,?} & 0\\
0 \ar[r] & F^{p+1}H^k(Y,R) \ar[r] & F^{p}H^k(Y,R) \ar[r] & \Gr_F^{p}H^k(Y,R) \ar[r] & 0
}
\end{xy}
\end{equation}
where the first two vertical maps are isomorphisms by Lemma \ref{lemma pure}. These isomorphisms imply that the upper sequence is exact on the left. As it is part of the long exact sequence associated with the sequence
\[
0\to \Omega_{\sY/S}^{\geq p+1} \to \Omega_{\sY/S}^{\geq p} \to \Omega_{\sY/S}^p[-p] \to 0
\]
of complexes, injectivity at the $(k+1)$-st direct image yields surjectivity at the $k$-th, hence exactness of the upper sequence. Therefore, the morphism $R^{k-p} f_*\Omega_{\sY/S}^p \to \Gr_F^p H^k(Y,R)$ exists and by the five-lemma it is an isomorphism.
\end{proof}
\begin{Cor}\label{cor pure hpq}
There is a natural isomorphism  
\[
 \left(R^{k-p} f_*\Omega_{\sY/S}^p\right)_\wls \to[\isom]H^{p,q}(Y,R) := F^p_\wls \cap \ba{F^q_\wls} \subset H^k(Y,\R)\tensor R,
\]
which is functorial in $\sY$.
\end{Cor}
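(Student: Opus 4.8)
The plan is to realize the asserted isomorphism as a composite of natural isomorphisms that routes through the Hodge filtration over $R$ and its Weil restriction. The conceptual point is that over $R$ there is no complex conjugation, so the piece $H^{p,q}$ cannot be formed directly on $H:=H^k(Y,R)$; it is the passage to the Weil restriction, which lives over the $\R$-algebra $R_\wls$ and hence carries a canonical complex conjugation, that makes an honest Hodge decomposition available. Accordingly, I would first record that $\sH^k(\sY)=(H^k(Y,\R),F^\bullet)$ is a pure Hodge structure over $R$ of weight $k$ by Lemma \ref{lemma pure}, and that by Lemma \ref{lemma weil hodge = hodge weil} its Weil restriction $\sH^k(\sY)_\wls=(H^k(Y,\R),F^\bullet_\wls)$ is a pure Hodge-Weil structure over $R_\wls$ of weight $k$, with underlying module $H_\wls$.

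Next I would apply Lemma \ref{lemma hodge decomposition} to $\sH^k(\sY)_\wls$, taking $q=k-p$. It tells us that $H^{p,q}(Y,R)=F^p_\wls\cap\ba{F^q_\wls}$ is a free $R_\wls$-module and, via equation \eqref{f=hpq}, that $F^{p+1}_\wls=\bigoplus_{r\geq p+1}H^{r,k-r}$. Consequently the composite $H^{p,q}(Y,R)\into F^p_\wls\onto \Gr_F^pH_\wls$ is an isomorphism, exhibiting $H^{p,q}(Y,R)$ as the canonical lift of the graded piece $\Gr_F^pH_\wls$ to a subobject of $H_\wls$.

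It then remains to identify $\Gr_F^pH_\wls$ with a Weil restriction and to invoke Corollary \ref{cor pure grf}. Faithful exactness of the Weil restriction functor (Lemma \ref{lem weil restriction}), exactly as used in the proof of Lemma \ref{lemma weil hodge = hodge weil}, yields $\Gr_F^pH_\wls=\left(\Gr_F^pH\right)_\wls$. On the other hand, Corollary \ref{cor pure grf} gives a natural isomorphism $R^{k-p}f_*\Omega^p_{\sY/S}\isom\Gr_F^pH$, to which I apply the Weil restriction functor. Composing everything produces the desired natural isomorphism
\[
\left(R^{k-p}f_*\Omega^p_{\sY/S}\right)_\wls\isom\left(\Gr_F^pH\right)_\wls=\Gr_F^pH_\wls\isom H^{p,q}(Y,R).
\]
Functoriality in $\sY$ is inherited from each arrow: Corollary \ref{cor pure grf} is natural, Weil restriction is a functor, and the last isomorphism is natural because a morphism of pure Hodge-Weil structures preserves $F^\bullet$ and commutes with complex conjugation by the Remark after Definition \ref{definition mhws over r}, hence carries $H^{p,q}$ into the corresponding piece. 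I do not anticipate a genuine difficulty here; the only step that needs care is the commutation of the graded piece with Weil restriction, so that Corollary \ref{cor pure grf} can be fed in, and this is precisely the faithful exactness of Lemma \ref{lem weil restriction}.
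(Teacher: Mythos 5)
Your proof is correct and follows essentially the same route as the paper: the paper's own (one-line) argument is to apply Weil restriction to the diagram \eqref{grf iso} from Corollary \ref{cor pure grf} and then invoke Lemma \ref{lemma hodge decomposition}, which is exactly the composite $\left(R^{k-p}f_*\Omega^p_{\sY/S}\right)_\wls\isom\left(\Gr_F^pH\right)_\wls=\Gr_F^pH_\wls\isom H^{p,q}(Y,R)$ that you spell out. Your version merely makes explicit the intermediate steps (faithful exactness of Weil restriction to commute $\Gr_F^p$ past $(\cdot)_\wls$, and the splitting $F^p_\wls=F^{p+1}_\wls\oplus H^{p,q}$ from the proof of Lemma \ref{lemma hodge decomposition}) that the paper leaves implicit.
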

\begin{proof} This is deduced directly by applying Weil restriction to the diagram \eqref{grf iso} and using Lemma \ref{lemma hodge decomposition}.
\end{proof}
Recall that a module homomorphism has constant rank if and only if its cokernel is free by Lemma \ref{lemma constant rank}.
\begin{Prop}\label{proposition free smooth}
Let $f:\sY \to S$, $g:\sX\to S$  be proper and smooth over $S=\Spec R$, $R\in \Art_\C$ and let $i:\sY \to \sX$ be an $S$-morphism. Then the induced morphisms $i^* :R^qg_*\Omega^p_{\sX/S} \to R^qf_*\Omega^p_{\sY/S}$ have constant rank.
\end{Prop}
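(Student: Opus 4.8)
The plan is to realise the morphism $i^*$ on higher direct images as the $(p,q)$-part of a morphism of pure Hodge structures over $R$, where the constant-rank statement has already been established for such morphisms, and then to descend the resulting freeness of the cokernel through Weil restriction. First I would fix $p,q$ and set $k:=p+q$. By Lemma~\ref{lemma pure} the $S$-morphism $i:\sY\to\sX$ induces a morphism $i^*:\sH^k(\sX)\to\sH^k(\sY)$ of pure Hodge structures of weight $k$ over $R$. Applying Lemma~\ref{lemma morphisms} to this morphism, its $(p,q)$-component $(i^*)^{p,q}$ has constant rank, equivalently free cokernel by Lemma~\ref{lemma constant rank}.

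Next I would link $(i^*)^{p,q}$ back to the map in the statement. Since $k-p=q$, Corollary~\ref{cor pure hpq} applied to $f$ and to $g$ supplies functorial isomorphisms $(R^qg_*\Omega^p_{\sX/S})_\wls\isom H^{p,q}(X,R)$ and $(R^qf_*\Omega^p_{\sY/S})_\wls\isom H^{p,q}(Y,R)$. Their functoriality in the variety identifies the Weil restriction $(i^*)_\wls$ of the direct image map $i^*:R^qg_*\Omega^p_{\sX/S}\to R^qf_*\Omega^p_{\sY/S}$ with $(i^*)^{p,q}$, so $(i^*)_\wls$ has free cokernel. Because Weil restriction is exact (Lemma~\ref{lem weil restriction}), one has $\coker\left((i^*)_\wls\right)=\left(\coker i^*\right)_\wls$; hence $\coker i^*$ is free by Lemma~\ref{lemma coker}, and $i^*$ has constant rank by Lemma~\ref{lemma constant rank}.

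The step I would have to treat with the most care is the identification in the middle paragraph, namely that the Weil restriction of the morphism on higher direct images genuinely coincides with the Hodge component $(i^*)^{p,q}$ under the isomorphisms of Corollary~\ref{cor pure hpq}. This hinges on the functoriality asserted there, which in turn rests on the naturality of the isomorphism of Corollary~\ref{cor pure grf} and of the Hodge filtration from Lemma~\ref{lemma pure}, and on keeping the two uses of the symbol $i^*$ (the map on $R^q\cdot{}_*\Omega^p$ versus the map on cohomology $H^k$) distinct but compatible. Once this compatibility is verified, the remainder is a formal consequence of the exactness of Weil restriction together with the freeness criterion of Lemma~\ref{lemma coker}.
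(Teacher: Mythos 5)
Your proposal is correct and follows essentially the same route as the paper: realizing $i^*$ as the $(p,q)$-component of the induced morphism of pure Hodge structures over $R$ (after Weil restriction), invoking Lemma~\ref{lemma morphisms} for freeness of $\coker i^{p,q}$, transferring this through the functorial isomorphisms of Corollary~\ref{cor pure hpq}, and descending via Lemmas~\ref{lem weil restriction} and~\ref{lemma coker}. The compatibility you flag as the delicate point is exactly what the paper records as the commutative diagram built from Corollary~\ref{cor pure hpq}.
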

\begin{proof}
By Lemma \ref{lemma pure} we know that the morphism $i$ induces a morphism $\sH^k(\sX)~\to~\sH^k(\sY)$ between the pure Hodge structures over $R$ associated with $\sX$ and $\sY$. Taking Weil restrictions this gives a morphism $\sH^k(\sX)_\wls \to \sH^k(\sY)_\wls$ of Hodge-Weil structures by Lemma \ref{lemma weil hodge = hodge weil}. Let $i^{p,q}:H^{p,q}(X) \to H^{p,q}(Y)$ be the induced map.
By Corollary \ref{cor pure hpq} the diagram
\begin{equation*}\label{coker diagram}
\begin{xy}
\xymatrix{
\left(R^{k-p} f_*\Omega_{\sY/S}^p\right)_\wls \ar[r]^{i^*_\wls} \ar[d]^\isom & \left(R^{k-p} g_*\Omega_{\sX/S}^p\right)_\wls \ar[d]^\isom \ar[r] & \coker i^*_\wls \ar[d] \ar[r] &0\\
H^{p,q}(Y)   \ar[r]^{i^{p,q}} & H^{p,q}(X) \ar[r]& \coker{i^{p,q}} \ar[r] &0\\
}
\end{xy}
\end{equation*}
commutes and the first two vertical maps are isomorphisms. Therefore, also the third vertical map is an isomorphism. We know that $\coker i^{p,q}$ is free by Lemma \ref{lemma morphisms}, hence so is $\coker i^*_\wls$. Now the claim follows from Lemma \ref{lemma coker}, as $\coker i^*_\wls = \left(\coker i^*\right)_\wls$ by Lemma \ref{lem weil restriction}.
\end{proof}
Proposition \ref{proposition free smooth} together with Lemma \ref{lemma morphisms} can be seen as a formalization of the following argument: If $S$ is the base manifold of a small deformation and $t\in S$, the maps $H^q(X_t,\Omega_{X_t}^p) \to H^q(Y_t,\Omega_{Y_t}^p)$, the rank of which is semi-continuous in $t$, add up to the topological map $H^i(X_t,\C) \to H^i(Y_t,\C)$ by the Hodge decomposition. The rank of the latter is independent of $t$ and by semi-continuity the summands also have constant rank. 
\subsection{Mixed Hodge structures on normal crossing families}\label{subsec hodge structure nc}
Let $Y$ be a simple normal crossing $\C$-variety and $f:\sY \to S$ be a locally trivial deformation of $Y$ over $S=\Spec R$ with $R \in \Art_\C$. Here we need for the first time that $\sY$ is a scheme rather than a complex space, because we want to invoke Lemma \ref{lemma irreducibles}. However, this need is probably only due the approach we chose. The analogue of Lemma \ref{lemma irreducibles} should be valid for complex spaces, too.

By Lemma \ref{lemma friedman} \eqref{zwei} there is a quasi-isomorphism $\tOm_{\sY/S}^\bullet \simeq \goths({(a_\bullet)}_*\Omega_{\sY^{\bullet}/S}^\bullet)$, where $\goths(\cdot)$ denotes the single complex associated with a double complex. 
We define filtrations 
$W_{-m} \tOm_{\sY/S}^\bullet := \goths(({a_{\geq m}})_*\Omega_{\sY^{\geq m}/S}^\bullet)$ 
and 
$F^p\tOm_{\sY/S}^\bullet:= \tOm_{\sY/S}^{\geq p}$.
These give rise to filtrations $F^pH^k(Y,R)$ and $W_mH^k(Y,R)$ on $H^k(Y,R)$ if we put
\begin{equation}\label{weight filtration mixed}
W_m R^kf_*\tOm_{\sY/S}^\bullet := \img\left(R^kf_*W_{m-k}\tOm_{\sY/S}^\bullet\to R^kf_*\tOm_{\sY/S}^\bullet\right)
\end{equation}
and
\begin{equation}\label{hodge filtration mixed}
F^p R^kf_*\tOm_{\sY/S}^\bullet := \img\left(R^kf_*F^p\tOm_{\sY/S}^\bullet\to R^kf_*\tOm_{\sY/S}^\bullet\right)
\end{equation}
and use the isomorphisms $H^k(Y^\an,\ul{R}_{Y^\an}) \to R^k f^\an_*\tOm_{\sY^\an/S}^\bullet$ from Lemma \ref{lemma friedman} \eqref{zwei} and $R^k f_*\tOm_{\sY/S}^\bullet \to R^k f^\an_*\tOm_{\sY^\an/S}^\bullet$ from Lemma~\ref{lemma friedman} \eqref{vier}.
\begin{Thm}\label{thm mixed}
Let $Y$ be a proper simple normal crossing variety over $\C$ and let $f:\sY \to S$ be a locally trivial deformation of $Y$ over $S=\Spec R$ for $R\in \Art_\C$. Then 
\begin{equation}\label{is mhs over r}
\sH^k(\sY)=(H^k(Y^\an,\R), W_m H^k(Y^\an,\ul{R}_{Y^\an}), F^p H^k(Y^\an,\ul{R}_{Y^\an}))
\end{equation}
is a mixed Hodge structure over $R$. Moreover, $R^k f_*F^p\tOm_{\sY/S}^\bullet\to R^k f_*\tOm_{\sY/S}^\bullet$ is injective so that $F^p H^k(Y^\an,\ul{R}_{Y^\an}) \isom R^k f_*\tOm_{\sY/S}^{\geq p}$.
\end{Thm}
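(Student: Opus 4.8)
The plan is to verify the two axioms of Definition \ref{definition mhs over r} for the triple $\sH^k(\sY)$, reducing everything to the degeneration and freeness results of Deligne's theorem (Theorem \ref{thm deligne}) together with the comparison statements of Lemma \ref{lemma friedman}. The underlying $\R$-vector space is $H^k(Y^\an,\R)$; tensoring with $\C$ and then with $R$ gives $H^k(Y^\an,\ul R_{Y^\an})$, which by Lemma \ref{lemma friedman}\eqref{zwei} and \eqref{vier} is computed as $R^kf_*\tOm^\bullet_{\sY/S}$. So the two filtrations $W_\bullet$ and $F^\bullet$ defined in \eqref{weight filtration mixed} and \eqref{hodge filtration mixed} do live on $H$ as required.

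First I would establish condition (1), the freeness of all graded pieces $\Gr_F^p\Gr_m^W H$. The key input is that the weight filtration $W_\bullet$ is, up to shift, induced by the ``stupid'' filtration $W_{-m}\tOm^\bullet_{\sY/S}=\goths((a_{\geq m})_*\Omega^\bullet_{\sY^{\geq m}/S})$ coming from the semisimplicial resolution, and the Hodge filtration $F^\bullet$ is the na\"\i ve $\tOm^{\geq p}$. The strategy is to identify the graded pieces of these filtrations with direct images over the smooth pieces $\sY^m$ of the canonical resolution (Lemma \ref{lemma can res}): morally $\Gr_m^W$ produces the cohomology of $\sY^{m}$ with its smooth (proper) structure, on which Theorem \ref{thm deligne}\eqref{lemma deligne free} applies directly to give freeness of the $R^qf_*\Omega^p$. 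Running the spectral sequence of the weight filtration and invoking $E_1$-degeneration (Theorem \ref{thm deligne}\eqref{spec seq}) lets me compute the associated graded as a sum of such free modules; I then need the analogous Hodge-filtration degeneration on each graded piece, again from Theorem \ref{thm deligne}, to conclude that the bigraded object $\Gr_F^p\Gr_m^W H$ is free.

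Second, condition (2) requires that the central fiber $\sH^k(\sY)\tensor_R\C$ be an honest mixed Hodge structure. Here the strategy is base change: tensoring with $\C$ corresponds to restricting to the central fiber $Y$, and by the base-change compatibility in Theorem \ref{thm deligne}\eqref{lemma deligne free} the filtrations $F^\bullet\tensor_R\C$ and $W_\bullet\tensor_R\C$ are exactly the Hodge and weight filtrations on $H^k(Y^\an,\C)$ produced by the \emph{same} complex $\tOm^\bullet_{Y/\C}$ and its semisimplicial resolution. That this data is a mixed Hodge structure is Friedman's theorem \cite[Prop 1.5]{Fr83} (equivalently Deligne's construction for the simple normal crossing $Y$), so condition (2) follows once the filtrations are seen to specialize correctly; the freeness from step one guarantees that forming $\Gr$ commutes with $\tensor_R\C$, so no graded pieces collapse.

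Finally, for the injectivity addendum $R^kf_*F^p\tOm^\bullet_{\sY/S}\to R^kf_*\tOm^\bullet_{\sY/S}$ and the identification $F^pH^k\iso R^kf_*\tOm^{\geq p}_{\sY/S}$, I would imitate the inductive argument of Lemma \ref{lemma pure}: using the short exact sequences $0\to\tOm^{\geq p+1}\to\tOm^{\geq p}\to\tOm^p[-p]\to 0$ and the freeness of $R^{k-p}f_*\tOm^p$ from the degenerate spectral sequence, one shows by descending induction that the map on $k$-th direct images is injective with image the free submodule $F^p$. The main obstacle I anticipate is step one: unlike the smooth case of Lemma \ref{lemma pure}, here there are \emph{two} interacting filtrations, and I must be careful that the weight spectral sequence degenerates and that its graded pieces genuinely reduce to smooth-proper direct images over the $\sY^m$ where Theorem \ref{thm deligne} is available. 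Controlling the interaction of $W$ and $F$ — i.e.\ that the bifiltered complex is suitably ``split'' so that $\Gr_F\Gr^W$ is free rather than merely $\Gr^W$ and $\Gr_F$ separately — is the delicate point, and I expect to lean heavily on the $E_1$-degeneration to linearize it.
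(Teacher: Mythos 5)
Your overall architecture --- verifying the two axioms of Definition \ref{definition mhs over r}, obtaining freeness of the $F$-graded pieces and the injectivity addendum by repeating the inductive argument of Lemma \ref{lemma pure} with Theorem \ref{thm deligne} in place of Deligne's smooth-case theorem, and deducing condition (2) from base-change compatibility --- coincides with the paper's proof. However, there is a genuine gap in your step one, exactly at the point you yourself flag as delicate. The module $\Gr_m^W\Gr_F^p H$ is \emph{not} an $E_1$-term of any spectral sequence to which Theorem \ref{thm deligne} applies directly: it is the $E_\infty$-term of the weight spectral sequence \eqref{weight spectral sequence}, whose $E_1$-page consists of the free modules $R^m f_*\Omega^p_{\sY^{k}/S}$ and whose $d_1$ is the semi-simplicial differential $\delta$. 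This spectral sequence degenerates at $E_2$, not at $E_1$, and Theorem \ref{thm deligne}\eqref{spec seq} concerns the Hodge spectral sequence $R^qf_*\tOm^p_{\sY/S}\Rightarrow R^{p+q}f_*\tOm^\bullet_{\sY/S}$, which says nothing about the weight direction. Two ingredients are therefore missing. First, over an Artin ring the cohomology of a complex of free modules need not be free, so freeness of $E_2=\ker\delta/\img\delta$ requires that $\delta$ have \emph{constant rank}; this is precisely Proposition \ref{proposition free smooth}, a nontrivial input resting on the Weil restriction and Hodge--Weil machinery, combined with Lemma \ref{lem const rank cohomology}. Second, degeneration of \eqref{weight spectral sequence} at $E_2$ over $R$ does not follow formally from degeneration over $\C$; the paper establishes it by the length comparison $\sum_{k+m=n}\lg_R\bigl(E_2^{k,m}\bigr)=\lg_R\bigl(R^n f_*\tOm^p_{\sY/S}\bigr)$, which uses base-change compatibility of the $E_2$-terms. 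Without these two points your argument for freeness of the bigraded pieces does not close. The remainder of your plan (central fiber, injectivity of $R^kf_*F^p\tOm^\bullet_{\sY/S}\to R^kf_*\tOm^\bullet_{\sY/S}$) is sound and agrees with the paper.
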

\begin{proof} Literally as in the pure case, see Lemma \ref{lemma pure} and Corollary \ref{cor pure grf}, one shows that the $R$-modules $\Gr_F^p R^kf_*\tOm_{\sY/S}^\bullet$ are free and isomorphic to $R^kf_*\Gr_F^p\tOm_{\sY/S}^\bullet=R^kf_*\tOm_{\sY/S}^p$ and that $R^k f_*F^p\tOm_{\sY/S}^\bullet\to R^k f_*\tOm_{\sY/S}^\bullet$ is injective. The only difference is that one has to use Theorem \ref{thm deligne} instead of \cite[Thm 5.5]{De68}. To verify that \eqref{is mhs over r} is a mixed Hodge structure over $R$, we have to show that the graded objects $\Gr_m^W\Gr_F^p H^k(Y^\an,\ul{R}_{Y^\an})$ are free $R$-modules, or equivalently that the $\Gr_m^W\Gr_F^p R^kf_*\tOm_{\sY/S}^\bullet$ are free $R$-modules, and that the central fiber is a mixed Hodge structure in the ordinary sense. 

The free $R$-module $R^kf_*\tOm_{\sY/S}^p$ is the abutment of the spectral sequence
\begin{equation}\label{weight spectral sequence}
E_1^{k, m}=R^m f_*\;{a_k}_*\Omega_{\sY^{k}/S}^p \Rightarrow R^{k+m}f_*\tOm_{\sY/S}^p
\end{equation}
induced by the resolution \eqref{resolution} for fixed $p$. The filtration defined in \eqref{weight filtration mixed} induces a weight filtration $\Gr_F^p R^kf_*\tOm_{\sY/S}^\bullet$ in the obvious way and the graded objects with respect to this filtration are the $E_\infty$ terms of the spectral sequence \eqref{weight spectral sequence}. By \cite[Thm 5.5]{De68} the $R$-modules $E_1^{k, m}$ are free and compatible with base change. Moreover, the differential $d_1$ on $E_1^{k, m}$ is given by the semi-simplicial differential
$\delta: R^m f_*{a_k}_*\Omega_{\sY^{k}/S}^p \to R^m f_*{a_k}_*\Omega_{\sY^{k+1}/S}^p.$

This morphism  has constant rank by Proposition \ref{proposition free smooth}. Hence $E_2^{k, m}$ is free, too, and compatible with base change by Lemma \ref{lem const rank cohomology}. In the case $R=\C$ the spectral sequence is known to degenerate at $E_2$, see \cite[Thms 3.12, 3.18]{PS}. As all $E_2$-terms of \eqref{weight spectral sequence} are compatible with base change we have for all $n$ that
\begin{align*}
\sum_{k + m = n} \lg_R \left(E_2^{k,m}\right)  &= \lg_R(R) \sum_{k + m = n} \dim_\C \left(E_2^{k,m}\tensor\C\right)  \\
&= \lg_R(R) \dim_\C\left(R^n f_*\tOm_{Y/\C}^p\right)\\
 &= \lg_R \left(R^n f_*\tOm_{\sY/S}^p\right).
\end{align*}
Thus, the spectral sequence \ref{weight spectral sequence} also degenerates at $E_2$ and the $R$-modules $E_\infty^{k,m} = \Gr_m^W R^{k+m} f_*\tOm_{\sY/S}^p = \Gr_m^W \Gr_F^p R^{k+m} f_*\tOm_{\sY/S}^\bullet$ coincide with the free $R$-modules $E_2^{k,m}$. Again, as all graded objects are compatible with base change, $\sH$ restricts to a mixed Hodge structure on the central fiber, which is the usual mixed Hodge structure on $Y$.
\end{proof}
Let us isolate an observation from the proof of the previous lemma.
\begin{Cor}\label{cor weight seq}
Let $Y$ be a proper simple normal crossing variety over $\C$ and let $f:\sY \to S$ be a locally trivial deformation of $Y$ over $S=\Spec R$ for $R\in \Art_\C$. Then the spectral sequence \eqref{weight spectral sequence} degenerates at $E_2$.\qed
\end{Cor}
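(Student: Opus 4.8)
The plan is to observe that this degeneration was already established in passing during the proof of Theorem \ref{thm mixed}, so the task is simply to isolate the relevant length computation and spell out why it yields degeneration at $E_2$.

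First I would recall the input from Theorem \ref{thm deligne}: since each $\sY^k \to S$ is smooth and proper, the $E_1$-terms $E_1^{k,m}=R^m f_*({a_k})_*\Omega_{\sY^{k}/S}^p$ are free $R$-modules compatible with arbitrary base change. The differential $d_1$ is induced by the semi-simplicial differential, and by Proposition \ref{proposition free smooth} it has constant rank. Applying Lemma \ref{lem const rank cohomology}, I conclude that the $E_2$-terms are again free $R$-modules compatible with base change.

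Next I would compare total lengths. Using the classical degeneration at $E_2$ for $R=\C$ from \cite[Thms 3.12, 3.18]{PS}, together with the base-change compatibility of the $E_2$-terms from the previous step and the freeness of the abutment $R^n f_*\tOm_{\sY/S}^p$ from Theorem \ref{thm deligne}, I obtain for every $n$ the chain of equalities
\[
\sum_{k+m=n}\lg_R\left(E_2^{k,m}\right) = \lg_R(R)\,\dim_\C\left(R^n f_*\tOm_{Y/\C}^p\right) = \lg_R\left(R^n f_*\tOm_{\sY/S}^p\right).
\]
The first equality uses that each $E_2^{k,m}$ is free and compatible with base change, so its length is $\lg_R(R)\dim_\C(E_2^{k,m}\tensor\C)$, combined with the complex degeneration; the second uses base-change freeness of the abutment.

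Finally I would conclude from general convergence: each $E_\infty^{k,m}$ is a subquotient of $E_2^{k,m}$, so $\lg_R(E_\infty^{k,m})\le\lg_R(E_2^{k,m})$, while the abutment satisfies $\lg_R(R^n f_*\tOm_{\sY/S}^p)=\sum_{k+m=n}\lg_R(E_\infty^{k,m})$. The displayed equality then forces $\lg_R(E_\infty^{k,m})=\lg_R(E_2^{k,m})$ for every pair $(k,m)$, which is exactly the assertion that all higher differentials vanish, i.e. the spectral sequence degenerates at $E_2$. There is no genuine obstacle beyond bookkeeping, since all the real work lives in Theorem \ref{thm mixed}; the only point to flag is that one must feed the base-change freeness of the $E_2$-terms into the known complex degeneration in order to upgrade the a priori inequality $\lg_R(E_\infty^{k,m})\le\lg_R(E_2^{k,m})$ to an equality.
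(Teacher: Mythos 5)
Your proposal is correct and follows essentially the same route as the paper: the Corollary is stated with a \qed precisely because the degeneration is established inside the proof of Theorem \ref{thm mixed} by exactly the chain you describe (freeness and base-change compatibility of the $E_1$-terms, constant rank of $d_1$ via Proposition \ref{proposition free smooth} and Lemma \ref{lem const rank cohomology}, then the length count against the known degeneration for $R=\C$). Your final paragraph merely makes explicit the standard subquotient inequality $\lg_R(E_\infty^{k,m})\le\lg_R(E_2^{k,m})$ that the paper leaves implicit.
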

\begin{Thm}\label{thm free singular}
Let $S=\Spec R$ where $R\in\Art_\C$, let $Y$ be a proper simple normal crossing $\C$-variety and let $g:\sX\to S$ and $f:\sY\to S$ be proper, algebraic $S$-schemes. Assume that $\sY \to S$ is a locally trivial deformation of $Y$ and that $\sX\to S$ is smooth. Let $i:\sY \to \sX$ be an $S$-morphism. Then for all $p,q$ the morphism $i^*:R^qg_*\Omega^p_{\sX/S} \to R^qf_*\tOm^p_{\sY/S}$ has constant rank.
\end{Thm}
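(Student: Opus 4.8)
The plan is to run the argument of Proposition \ref{proposition free smooth} with the pure Hodge structure on $\sY$ replaced by the mixed Hodge structure over $R$ from Theorem \ref{thm mixed}: realize the map $i^*\colon R^qg_*\Omega^p_{\sX/S}\to R^qf_*\tOm^p_{\sY/S}$ as the $p$-th Hodge-graded piece of a morphism of mixed Hodge structures over $R$, Weil-restrict so the underlying modules split into Hodge--Weil bidegree pieces, and quote Lemma \ref{lemma morphisms}. First I would verify that $i$ induces a morphism of mixed Hodge structures over $R$
\[
i^*\colon \sH^k(\sX)\longrightarrow \sH^k(\sY),\qquad k=p+q,
\]
where $\sH^k(\sX)$ is the pure Hodge structure over $R$ of weight $k$ from Lemma \ref{lemma pure} (regarded as mixed, with $W_{k-1}=0$ and $W_k=H^k(X,\R)\tensor_\R\C\tensor_\C R$) and $\sH^k(\sY)$ is the mixed Hodge structure of Theorem \ref{thm mixed}. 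The map $i^*$ is induced by the pullback of relative differential forms $i^{-1}\Omega^\bullet_{\sX/S}\to\tOm^\bullet_{\sY/S}$ composed with the de Rham identifications; under the topological identifications it is the real pullback $H^k(X^\an,\R)\to H^k(Y^\an,\R)$, hence is defined over $\R$. Pullback of forms preserves form degree, so it is filtered for the stupid filtrations and respects $F^\bullet$. For the weight filtration there is nothing to check: on $\sH^k(\sX)$ one has $W_m=0$ for $m<k$, while $W_kH^k(Y^\an,\ul{R}_{Y^\an})=H^k(Y^\an,\ul{R}_{Y^\an})$ since $W_0\tOm^\bullet_{\sY/S}=\tOm^\bullet_{\sY/S}$ in \eqref{weight filtration mixed}; thus $i^*(W_m^{\sX})\subset W_m^{\sY}$ for every $m$.

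Next I would identify the map in the statement with a Hodge-graded piece of $i^*$. Corollary \ref{cor pure grf} gives a natural isomorphism $R^qg_*\Omega^p_{\sX/S}\isom\Gr_F^pH^{p+q}(X,R)$, and the argument proving Theorem \ref{thm mixed} gives the analogous natural isomorphism $R^qf_*\tOm^p_{\sY/S}\isom\Gr_F^pH^{p+q}(Y^\an,\ul{R}_{Y^\an})$; both being functorial, under them $i^*$ becomes $\Gr_F^p(i^*)$. Applying Weil restriction (Lemma \ref{lemma weil hodge = hodge weil}) turns $i^*$ into a morphism of mixed Hodge--Weil structures, and Lemma \ref{lemma hodge decomposition}, applied to each weight-graded piece, shows that $\Gr_F^p$ of the Weil restriction is the bidegree-$(p,\cdot)$ part of the Hodge--Weil splitting; concretely $\left(\Gr_F^p(i^*)\right)_\wls=\bigoplus_{s}(i^*)^{p,s}$ with $(i^*)^{p,s}\colon H^{p,s}(X)\to H^{p,s}(Y)$.

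Finally I would conclude exactly as in Proposition \ref{proposition free smooth}. By Lemma \ref{lemma morphisms} each $(i^*)^{p,s}$ has free cokernel (and for $s\neq q$ the source $H^{p,s}(X)$ vanishes, so the cokernel is the free module $H^{p,s}(Y)$), whence $\coker\left(\Gr_F^p(i^*)\right)_\wls=\bigoplus_s\coker (i^*)^{p,s}$ is free. Since Weil restriction is exact (Lemma \ref{lem weil restriction}) and commutes with forming $\Gr_F^p$ and cokernels, this module is $\left(\coker i^*\right)_\wls$; hence $\coker i^*$ is free by Lemma \ref{lemma coker}, and $i^*$ has constant rank by Lemma \ref{lemma constant rank}.

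The step I expect to be the crux is the first one: making precise that $i^*$ is a morphism of mixed Hodge structures over $R$, and that the natural isomorphism $R^qf_*\tOm^p_{\sY/S}\isom\Gr_F^pH^{p+q}(Y^\an,\ul{R}_{Y^\an})$ together with the Hodge--Weil splitting of $\Gr_F^p$ is genuinely compatible with the map induced by $i$. Once this functoriality is in place, everything downstream is the same Weil-restriction bookkeeping (Lemmas \ref{lem weil restriction}, \ref{lemma coker}, \ref{lemma constant rank}) already used in the smooth case.
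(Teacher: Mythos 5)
There is a genuine gap at the decomposition step. Lemma \ref{lemma hodge decomposition} gives the splitting $H=\bigoplus_{p+q=k}F^p\cap\ba{F^q}$ only for \emph{pure} Hodge--Weil structures; a mixed Hodge(--Weil) structure over $R$ does not decompose as $\bigoplus_{p,s}F^p\cap\ba{F^s}$ (already over $\C$ this fails -- one needs Deligne's bigrading $I^{p,s}$, which is \emph{not} $F^p\cap\ba{F^s}$). Consequently your claim that $\left(\Gr_F^p(i^*)\right)_\wls=\bigoplus_s (i^*)^{p,s}$, and your appeal to Lemma \ref{lemma morphisms} (whose proof invokes \eqref{h=hpq} and hence is only valid in the pure case, where the paper actually uses it), are unjustified. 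What you would really get from the weight filtration is that each weight-graded piece $\Gr_m^W\Gr_F^p(i^*)$ has free cokernel, but that alone does not imply $\coker\Gr_F^p(i^*)$ is free: for $R=\C[\eps]/(\eps^2)$, a map $R\to Re_1\oplus Re_2$ with $1\mapsto \eps e_1$, $W_0=Re_1$, has free graded cokernels but cokernel $R/(\eps)\oplus R$. One needs a strictness statement with respect to $W$.

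That strictness is exactly the crux of the paper's proof, which you have not supplied: one must show $\img i^*\cap W_{p+q-1}R^qf_*\tOm^p_{\sY/S}=0$. The paper proves this on the central fiber using Deligne's weak splitting $I^{r,s}_Y$ and the purity of $H^{p+q}(X,\C)$ (so $\img i^*\subset I^{p,q}_Y$, which meets $W_{p+q-1}$ trivially), and lifts it to $R$ via Corollary \ref{cor intersection of free}. It then combines the exact sequence $0\to W_{p+q-1}R^qf_*\tOm^p_{\sY/S}\to R^qf_*\tOm^p_{\sY/S}\to R^qf_*\Omega^p_{\sY^{0}/S}\to R^qf_*\Omega^p_{\sY^{1}/S}$, coming from $E_2$-degeneration of the weight spectral sequence (Corollary \ref{cor weight seq}), with Proposition \ref{proposition free smooth} applied to the smooth schemes $\sY^{0},\sY^{1}$, and concludes by the triangle lemma \ref{lem const rank dreieck}. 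Your identification of $i^*$ with $\Gr_F^p$ of a filtered morphism and the Weil-restriction bookkeeping are fine, but without the vanishing $\img i^*\cap W_{p+q-1}=0$ the argument does not close; this is the missing idea rather than the functoriality you flagged.
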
	
\begin{proof}
Let $\begin{xy}
\xymatrix@C=1em{
\ldots  \ar[r]\ar@<0.5ex>[r]\ar@<-0.5ex>[r] & \sY^{1} \ar@<0.5ex>[r]\ar@<-0.5ex>[r] & \sY^{0} \ar[r]& \sY \\
}
\end{xy}$ be the semi-simplicial resolution of $\sY$ over $S$ from Lemma \ref{lemma can res}. This means in particular that $\sY^{0}$ is a locally trivial deformation of the normalization. By Theorem \ref{thm deligne} the $R$-modules $R^qg_*\Omega^p_{\sX/S}$, $R^qf_*\tOm^p_{\sY/S}$ and $R^qf_*\Omega^p_{\sY^{k}/S}$ are free and compatible with base change. By Corollary \ref{cor weight seq} we know that the spectral sequences \eqref{weight spectral sequence} degenerate at $E_2$ for each $p$. As
$E_2^{0,q} = \ker\left(R^q f_*\Omega^p_{\sY^{0}/S} \to R^q f_*\Omega^p_{\sY^{1}/S}\right),$
this implies that the first row in
\begin{equation}\label{smooth to singular morphism}
\begin{xy}
\xymatrix@C=1.5em{
0\ar[r] &W_{p+q-1} R^q f_*\tOm^p_{\sY/S} \ar[r] & R^q f_*\tOm^p_{\sY/S} \ar[r]^(0.45)\eta & R^q f_*\Omega^p_{\sY^{0}/S} \ar[r]^\delta & R^q f_*\Omega^p_{\sY^{1}/S}\\
&& R^q g_*\Omega^p_{\sX/S} \ar[u]^{i^*} \ar[ru]_\vphi & &\\
}
\end{xy}
\end{equation}
is exact. 

Here $\img i^*$ does not intersect $W_{p+q-1} R^q f_*\tOm^p_{\sY/S}$, as it does not on the central fiber. This last claim can be shown using Deligne's weak splitting as follows. We denote $X:=\sX\times_S\C$ and put $H_Y^{p+q}:=H^{p+q}(Y,\C)$ and $H_X^{p+q}:=H^{p+q}(X,\C)$. We identify $H_Y^{p+q}$ and $H_X^{p+q}$ with the hypercohomology of $\tOm_Y^\bullet$ respectively $\Omega_X^\bullet$ and obtain
\begin{equation*}
\begin{xy}
\xymatrix@R=1em{
\left(W_{p+q-1} R^q f_*\tOm^p_{\sY/S}\right) \tensor \C \ar@{^(->}[r] \ar@{=}[d] & \left(R^q f_*\tOm^p_{\sY/S}\right) \tensor \C \ar@{=}[d] & \left(R^q g_*\Omega^p_{\sX/S}\right) \tensor \C \ar[l]_{i^*} \ar@{=}[d] \\
W_{p+q-1} R^q f_*\tOm^p_{Y} \ar@{^(->}[r] \ar@{=}[d] & R^q f_*\tOm^p_{Y} \ar@{=}[d] & R^q g_*\Omega^p_{X} \ar[l] \ar@{=}[d] \\
W_{p+q-1} \Gr^p_F H_Y^{p+q} \ar@{^(->}[r] & \Gr^p_F H_Y^{p+q} & \Gr^p_F H_X^{p+q} \ar[l]
}
\end{xy}
\end{equation*}
Deligne's weak splitting \cite[Ex 3.3 and Lem-Def 3.4]{PS} is a decomposition $H_Y^k = \bigoplus_{r,s} \, I_Y^{r,s}$
such that
\[
F^pH_Y^k = \bigoplus_{r\geq p} \ I_Y^{r,s} \quad {\rm and} \quad W_m H_Y^k = \bigoplus_{r+s\leq m} I_Y^{r,s}.
\]
The subspaces $I_Y^{r,s}\subset H^{r+s}_Y$ project isomorphically onto the subquotients $\Gr_{r+s}^W\Gr_F^r H_Y$. The Deligne weak splitting is preserved under morphisms of mixed Hodge structures. As the Hodge structure on $H_X^{p+q}$ is pure of weight $p+q$, this yields $\img i^*\subset I^\pq_Y$ and therefore 
\[
\img i^* \cap W_{p+q-1} \Gr^p_F H_Y^{p+q} \; \subset \; I^\pq_Y \cap \bigoplus_{r+s\leq p+q-1} I_Y^{r,s} = 0 .
\]
as claimed. 

We come back to diagram \eqref{smooth to singular morphism} and observe that $\vphi$ has constant rank by Proposition \ref{proposition free smooth}. Also $\eta$ has constant rank as $\delta$ has constant rank by Proposition \ref{proposition free smooth} and hence $\coker \eta = \ker \delta$ is free.  As $\img i^*\cap \ker\eta =0$, Lemma \ref{lem const rank dreieck} implies that $i^*$ has constant rank completing the proof.
\end{proof}
\subsection{Vista} It seems obvious that the results of this article are only a master example of what is true in general. We belief that the statements of Theorem \ref{thm mixed}  and Theorem \ref{thm free singular} should hold true mutatis mutandis for all locally trivial varieties $\sX\to S$ and $\sY\to S$ over an Artinian base scheme. One way to establish these results would be to prove resolution of singularities in this setting and then produce semi-simplicial resolutions as in Deligne's approach. On the other hand it is clear that the local triviality condition is necessary.

\appendix 

\section{Commutative algebra}\label{sec commalg}
Let $R$ be a noetherian ring and $\vphi:F\to G$ be a morphism between finitely generated free $R$-modules. We define $I_j(\vphi)=\img(\vphi':\Lambda^j F\tensor (\Lambda^j G)^\vee \to R)$, where $\vphi'$ is induced by $\Lambda^j\vphi:\Lambda^jF\to\Lambda^jG$. If we interpret $\vphi$ as a matrix, then $I_j(\vphi)$ is the ideal generated by all $j\times j$-minors of $\vphi$. If $F$ and $G$ are finitely generated but not necessarily free, the definition still makes sense if $G$ is projective. One defines the \emph{rank of $\vphi$} as $\rk\vphi:= \max \left\{i:I_i(\vphi)\neq 0\right\}$. 
\begin{Def}\label{definition constant rank}
Let $R$ be a noetherian ring and $\vphi:F\to G$ be a morphism between finitely generated $R$-modules. Suppose $G$ is projective. We say that $\vphi$ has \emph{constant rank $k$} if $I_{k}(\vphi)=R$ and $I_{k - 1}(\vphi)=0$. We say that $\vphi$ has \emph{constant rank} if there is some $k$ such that $\vphi$ has constant rank $k$.
\end{Def}
A characterization of this property is given by the following Lemma, the proof of which is found at \cite[Prop 20.8]{E}.
\begin{Lem}\label{lemma constant rank}
Let $R$ be a noetherian ring and $\vphi:F\to G$ be a morphism between finitely generated $R$-modules. Suppose $G$ is projective. Then $\vphi$ has constant rank if and only if $\coker\vphi$ is a projective $R$-module.\qed
\end{Lem}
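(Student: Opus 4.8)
The plan is to treat this as a purely local statement. Both the hypothesis (the unit/vanishing conditions on ideals of minors) and the conclusion (projectivity of $\coker\vphi$) can be tested after localising at each prime $\gothp$, using that a finitely generated — hence, over the noetherian ring $R$, finitely presented — module is projective precisely when it is locally free, i.e. when $(\coker\vphi)_\gothp$ is a free $R_\gothp$-module for every $\gothp$. The first thing I would record is that the formation of $I_j(\vphi)$ commutes with localisation: the operations $\Lambda^j$, $\tensor$, passage to the dual of a projective module, and formation of the image all commute with the flat base change $R\to R_\gothp$, so that $I_j(\vphi)_\gothp=I_j(\vphi_\gothp)$. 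In Fitting-ideal language the assertion is exactly \cite[Prop 20.8]{E}, but I would give the normal-form argument directly.

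Before localising I would make two harmless reductions. First, choosing a surjection $\pi:R^m\onto F$ and replacing $\vphi$ by $\vphi\circ\pi$ changes neither $\coker\vphi$ (because $\pi$ is surjective) nor any $I_j(\vphi)$: since $\Lambda^j\pi$ is surjective, the image of $\Lambda^j(\vphi\circ\pi)$ in $\Lambda^j G$ coincides with that of $\Lambda^j\vphi$, and $I_j$ depends only on this image paired against $(\Lambda^jG)^\vee$. Thus I may assume $F$ is free. Second, after localising at $\gothp$ the projective module $G_\gothp$ becomes free, so the problem is reduced to a matrix $\vphi_\gothp$ between finite free modules over the local ring $R_\gothp$ with maximal ideal $\gothm$.

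Over this local ring I would run the classical argument in both directions. Let $k$ be the largest size of a minor of $\vphi_\gothp$ that is a unit, so that $I_k(\vphi_\gothp)=R_\gothp$ and $I_{k+1}(\vphi_\gothp)\subset\gothm$. If $\coker\vphi_\gothp$ is free, then the defining sequence splits, $\img\vphi_\gothp$ is a free direct summand of $G_\gothp$ of rank $n-r$ (with $n=\rk G_\gothp$, $r=\rk\coker\vphi_\gothp$), and in an adapted basis $\vphi_\gothp$ has its last $r$ rows zero and its top block surjective; Nakayama then yields a unit $(n-r)\times(n-r)$ minor while every larger minor uses a zero row, so $k=n-r$ and $I_{k+1}(\vphi_\gothp)=0$. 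Conversely, if $I_{k+1}(\vphi_\gothp)=0$, I would use a unit $k\times k$ minor as a pivot block: invertible row and column operations bring $\vphi_\gothp$ into the form $\left(\begin{smallmatrix}\eins{k}&0\\0&D\end{smallmatrix}\right)$, whose $(k+1)$-minors are (up to a unit) the entries of $D$; these generate $I_{k+1}(\vphi_\gothp)=0$, so $D=0$ and $\coker\vphi_\gothp$ is free. This proves the pointwise equivalence: $(\coker\vphi)_\gothp$ is free iff $I_k(\vphi_\gothp)=R_\gothp$ and $I_{k+1}(\vphi_\gothp)=0$ for $k=\rk\vphi_\gothp$.

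It remains to globalise, and this is where I expect the only genuine subtlety. The implication ``constant rank $\Rightarrow$ projective'' is immediate, since $I_k(\vphi)=R$ and $I_{k+1}(\vphi)=0$ persist at every prime and force $\coker\vphi$ to be locally free. For the converse one must check that the locally defined integers $k=\rk\vphi_\gothp$ coalesce into a single global value: as $G$ and $\coker\vphi$ are both projective, $\rk_\gothp G$ and $\rk_\gothp\coker\vphi$ are locally constant on $\Spec R$, hence so is their difference $k$, and it is genuinely constant once $\Spec R$ is connected — in particular for the local Artinian rings to which the lemma is applied here. With $k$ constant, the pointwise identities $I_k(\vphi)_\gothp=R_\gothp$ and $I_{k+1}(\vphi)_\gothp=0$ for all $\gothp$ upgrade to $I_k(\vphi)=R$ and $I_{k+1}(\vphi)=0$, i.e. to constant rank $k$ in the sense of Definition \ref{definition constant rank}. (Over a noetherian ring with disconnected spectrum one phrases the statement componentwise, equivalently by requiring each Fitting ideal of $\coker\vphi$ to be generated by an idempotent, which is the form taken in \cite[Prop 20.8]{E}.)
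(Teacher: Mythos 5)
Your proof is correct, and in substance it is the proof of the result the paper itself invokes: the paper offers no argument for this lemma but points to \cite[Prop 20.8]{E}, and your localization-plus-pivot argument (ideals of minors commute with base change, reduce to a matrix over $R_\gothp$ with $G_\gothp$ free, split off a unit $k\times k$ block, compare with the splitting coming from a free cokernel) is exactly the standard proof of that proposition, so there is nothing genuinely different to compare. Two places where you are in fact more careful than the statement you were proving deserve note. First, Definition \ref{definition constant rank} as printed demands $I_k(\vphi)=R$ and $I_{k-1}(\vphi)=0$, which is vacuous because cofactor expansion gives $I_k(\vphi)\subset I_{k-1}(\vphi)$; you silently worked with the intended condition $I_{k+1}(\vphi)=0$, which is the one matching \cite[Prop 20.8]{E}. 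Second, your connectedness caveat is a real one, not a scruple: for $R=\C\times\C$ and $\vphi:R\to R$ multiplication by the idempotent $(1,0)$, the cokernel $0\times\C$ is projective, yet $I_0(\vphi)=R$, $I_1(\vphi)=\bigl((1,0)\bigr)$ and $I_2(\vphi)=0$, so no single $k$ satisfies both conditions; the lemma as stated thus needs $\Spec R$ connected (or ``projective of constant rank'' in the conclusion), and it is harmless in this paper only because the lemma is applied exclusively over local Artin rings, as you observe. One bibliographic slip: \cite[Prop 20.8]{E} is itself phrased in the constant-rank form, not the idempotent-generated-Fitting-ideal form you attribute to it (that characterization is a separate exercise in the same chapter of Eisenbud); this does not affect your argument.
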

\begin{Lem}\label{lem free qoutient module}
Let $(R,\gothm)$ be a local Artin ring with residue field $k$ and $F_1 \subset F$ two finitely generated free $R$-modules. Then $F/F_1$ is free and $\vphi:F_1\tensor k \to F \tensor k$ is injective.
\end{Lem}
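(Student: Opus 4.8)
The plan is to prove the two assertions in the opposite order to their appearance: injectivity of $\vphi$ is the crux, and once it is known, freeness of $F/F_1$ drops out of a length count. Write $F\isom R^m$ and $F_1\isom R^n$, and let $A\in M_{m\times n}(R)$ be the matrix of the inclusion $F_1\into F$ with respect to chosen bases. Since $F_1$ is free of rank $n$ and embeds into $F$, the $R$-linear map $A\colon R^n\to R^m$ is injective. The map $\vphi$ is precisely the reduction $\ba{A}\colon k^n\to k^m$ of $A$ modulo $\gothm$, so the first goal is to show that injectivity of $A$ forces $\ba{A}$ to have rank $n$.

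First I would prove $\vphi$ injective by a socle argument. Because $R$ is Artinian and local, $\gothm$ is nilpotent; choose $N$ with $\gothm^{N}=0$ and $\gothm^{N-1}\neq 0$, and fix $0\neq\omega\in\gothm^{N-1}$, so that $\gothm\omega=0$. Suppose $\ba{A}$ were not injective. Then there is a unimodular vector $c\in R^n$ (some coordinate a unit) whose reduction lies in $\ker\ba{A}$, i.e. $Ac\in\gothm R^m$. Multiplying by $\omega$ gives $A(\omega c)=\omega\,(Ac)\in\gothm^{N-1}\gothm R^m=\gothm^{N}R^m=0$, while $\omega c\neq 0$ because $\omega$ times a unit is nonzero. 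This contradicts the injectivity of $A$, so $\vphi$ is injective.

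With $\vphi$ injective, freeness of $F/F_1$ follows formally. Right exactness of $-\tensor_R k$ applied to $0\to F_1\to F\to F/F_1\to 0$, together with the injectivity just proved, yields a short exact sequence $0\to F_1\tensor k\to F\tensor k\to (F/F_1)\tensor k\to 0$, whence $\dim_k (F/F_1)\tensor k=m-n$. By Nakayama's Lemma there is a surjection $\pi\colon R^{m-n}\onto F/F_1$. Since length is additive on short exact sequences and $\length(R^\ell)=\ell\,\length(R)$, we get $\length(F/F_1)=\length(F)-\length(F_1)=(m-n)\length(R)=\length(R^{m-n})$. A surjection of modules of equal finite length has trivial kernel, so $\pi$ is an isomorphism and $F/F_1\isom R^{m-n}$ is free.

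The main obstacle is the injectivity of $\vphi$: submodules of free modules over an Artinian local ring are in general neither free nor complemented, so one cannot argue naively. The essential point exploited above is that $F_1$ is itself \emph{free}, so that the inclusion is an injective map of free modules, and the nilpotence of $\gothm$ (through a nonzero socle element $\omega$) converts a hypothetical drop in rank over $k$ into an actual element of the kernel over $R$. An alternative route to the whole lemma is to note that $\ba{A}$ having rank $n$ lets one reduce $A$, via matrices invertible over $R$ (which are exactly those invertible modulo $\gothm$), to the standard block inclusion $\binom{I_n}{0}$, exhibiting $F/F_1\isom R^{m-n}$ directly.
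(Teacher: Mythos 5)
Your proof is correct and the crux coincides with the paper's: both detect a failure of injectivity of $\vphi$ by multiplying a suitable element by a nonzero socle element $\omega\in\Ann\gothm$ (which exists since $R$ is Artinian local), producing a nonzero element of $F_1$ killed by the inclusion into $F$. The only divergence is cosmetic: for the freeness of $F/F_1$ you run a self-contained length count with a Nakayama surjection, whereas the paper simply invokes the standard criterion (\cite[Cor A.6]{Se}) that, for an injection of modules over a local noetherian ring, the quotient is free once the map stays injective after $\tensor\,k$.
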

\begin{proof}
If $F_1\to F$ is injective, then $F/F_1$ is free if and only if $\vphi:F_1\tensor k \to F \tensor k$ is injective. This holds over any local noetherian ring by \cite[Cor A.6]{Se}. As both $F_1$ and $F$ are free, the diagram
\[
\begin{xy}
\xymatrix{
0 \ar[r] & \gothm F_1 \ar[r] \ar@{^(->}[d]& F_1 \ar[r] \ar@{^(->}[d]& F_1\tensor k \ar[r] \ar[d]^\vphi& 0\\
0 \ar[r] & \gothm F \ar[r] & F \ar[r] & F\tensor k \ar[r] & 0\\
}
\end{xy}
\]
has exact rows. If $\vphi$ is not injective, then there is $x_1\in F_1 \cap \gothm F$ with $x_1 \notin \gothm F_1$. Because of this last property we find $x_2,\ldots, x_k \in F$ such that $x_1,x_2,\ldots, x_k$ is a basis of $F$ by Nakayama's Lemma. In particular, if $\alpha x_1 = 0$ for some $\alpha \in R$, then $\alpha=0$. The case $R=k$ is trivial, so we may assume that the maximal ideal $\gothm$ is non-zero. So there is $0\neq\alpha\in\Ann\gothm$. Therefore we have $\alpha x_1 \in \alpha \gothm F =0$, a contradiction.
\end{proof}
\begin{Cor}\label{cor intersection of free}
Let $(R,\gothm)$ be a local Artin ring with residue field $k$ and $F_1, F_2 \subset F$ be two free submodules in a finitely generated free $R$-module. Then $F_1\cap F_2 =0$ if and only if $F_1\tensor k \cap F_2 \tensor k = 0$.
\end{Cor}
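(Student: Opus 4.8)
The plan is to reduce the equivalence to a single criterion for injectivity of a map of free modules, and then to detect that injectivity modulo $\gothm$.

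First I would introduce the $R$-linear map $\psi : F_1 \oplus F_2 \to F$, $(a,b) \mapsto a - b$, between finitely generated free $R$-modules. Its kernel consists exactly of the pairs $(c,c)$ with $c \in F_1 \cap F_2$, so $\ker\psi \isom F_1 \cap F_2$ and hence $F_1 \cap F_2 = 0$ if and only if $\psi$ is injective. To phrase the right-hand condition the same way I would invoke Lemma \ref{lem free qoutient module} for the inclusions $F_1 \subset F$ and $F_2 \subset F$: the induced maps $F_i \tensor k \to F \tensor k$ are injective, so $F_1 \tensor k$ and $F_2 \tensor k$ may be regarded as subspaces of $F \tensor k$, and the kernel of $\psi \tensor k$ is then identified with $(F_1 \tensor k) \cap (F_2 \tensor k)$. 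Thus the corollary becomes the assertion that $\psi$ is injective if and only if $\psi \tensor k$ is injective.

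For the implication ``$\psi$ injective $\Rightarrow \psi \tensor k$ injective'' I would note that $\psi$ induces an isomorphism of $F_1 \oplus F_2$ onto the free submodule $\img\psi \subset F$; applying Lemma \ref{lem free qoutient module} to $\img\psi \subset F$ yields injectivity of $(\img\psi) \tensor k \to F \tensor k$, hence of $\psi \tensor k$. So this direction follows directly from the preceding lemma.

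The converse carries the real content, and I expect it to be the main obstacle, since over a general ring injectivity is \emph{not} detected modulo $\gothm$; the Artinian hypothesis enters here precisely through the nilpotence of $\gothm$. Assuming $\psi \tensor k$ injective, I would lift a $k$-linear left inverse of $\psi \tensor k$ to an $R$-linear map $B : F \to F_1 \oplus F_2$, which is possible because $F$ is free. Then $B\psi$ reduces to the identity modulo $\gothm$, so $B\psi = \id + \phi$ with $\phi(F_1 \oplus F_2) \subset \gothm(F_1 \oplus F_2)$; as $\gothm$ is nilpotent, $\phi$ is nilpotent and $B\psi = \id + \phi$ is an automorphism, whence $\psi$ is injective. (Equivalently, for the contrapositive one may choose $0 \neq a \in \Ann_R \gothm$ and a $v \notin \gothm(F_1 \oplus F_2)$ with $\psi(v) \in \gothm F$; then $av \neq 0$ while $\psi(av) \in a\gothm F = 0$.) This completes the reduction and thereby the corollary.
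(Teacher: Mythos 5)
Your proof is correct, and your reduction to the injectivity of $\psi : F_1\oplus F_2\to F$, together with the forward implication (pushing the free image $\img\psi\subset F$ through Lemma \ref{lem free qoutient module}), is exactly what the paper does. Where you diverge is the converse: the paper simply quotes Sernesi's Corollary A.6, a local criterion valid over any noetherian local ring (a map into a flat module that is injective modulo $\gothm$ is injective with flat cokernel), whereas you give a self-contained argument that genuinely uses the Artinian hypothesis, lifting a $k$-linear retraction of $\psi\tensor k$ to $B:F\to F_1\oplus F_2$ and inverting $B\psi=\id+\phi$ via nilpotence of $\gothm$. That buys self-containedness at the price of generality; both are fine here since $R$ is Artinian. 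One caveat: your parenthetical ``equivalently, for the contrapositive'' argument with $a\in\Ann_R\gothm$ starts from the negation of injectivity of $\psi\tensor k$ (the existence of $v\notin\gothm(F_1\oplus F_2)$ with $\psi(v)\in\gothm F$) and produces the nonzero element $av\in\ker\psi$, so it establishes the contrapositive of the \emph{forward} implication ($\psi$ injective $\Rightarrow$ $\psi\tensor k$ injective) --- it is essentially the argument inside the proof of Lemma \ref{lem free qoutient module} --- and it cannot substitute for your lifting argument for the converse. Since the lifting argument is complete on its own, this is only a mislabelled aside, not a gap.
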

\begin{proof} The condition $F_1\cap F_2 =0$ means that $F_1\oplus F_2 \to F$ is injective. This implies injectivity of $F_1\tensor k\oplus F_2 \tensor k\to F\tensor k$  by Lemma \ref{lem free qoutient module}, hence $F_1\tensor k\cap F_2 \tensor k=0$. The converse again follows from \cite[Cor A.6]{Se} over any local noetherian ring.
\end{proof}
\begin{Lem}\label{lem const rank dreieck}
Let $R$ be a local Artin ring and 
\[
\begin{xy}
\xymatrix{
F \ar[dr]^\vphi \ar[d]_\psi&\\
G \ar[r]^\eta &H\\
}
\end{xy}
\]
a diagram of $R$-modules where $G,H$ are free, $\eta$ has constant rank and $\img \psi \cap \ker \eta = 0$. Then $\vphi$ has constant rank if and only if $\psi$ has.
\end{Lem}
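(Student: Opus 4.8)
The plan is to convert the statement about ranks into one about freeness of cokernels and then transport freeness through the triangle. Since the diagram commutes we have $\vphi=\eta\circ\psi$, and because both codomains $G$ and $H$ are free, hence projective, Lemma~\ref{lemma constant rank} tells us that $\vphi$ has constant rank if and only if $\coker\vphi$ is free, and likewise $\psi$ has constant rank if and only if $\coker\psi$ is free. So it suffices to prove that $\coker\psi$ is free if and only if $\coker\vphi$ is free.

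First I would record what the constant rank hypothesis on $\eta$ yields. By Lemma~\ref{lemma constant rank} the module $C:=\coker\eta$ is free; splitting $0\to\img\eta\to H\to C\to 0$ exhibits $\img\eta$ as a direct summand of the free module $H$, hence free, and splitting $0\to\ker\eta\to G\to\img\eta\to 0$ then exhibits $K:=\ker\eta$ as a direct summand of $G$, hence free as well. Thus both $K$ and $C$ are free $R$-modules.

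Next I would use the hypothesis $\img\psi\cap\ker\eta=0$ to link the two cokernels. As $\img\vphi=\eta(\img\psi)\subset\img\eta$, the map $\eta$ descends to $\bar\eta:\coker\psi\to\coker\vphi$. A direct computation identifies $\ker\bar\eta=(\img\psi+\ker\eta)/\img\psi\isom\ker\eta/(\ker\eta\cap\img\psi)=K$, where the hypothesis enters precisely in the second isomorphism, and $\coker\bar\eta\isom H/\img\eta=C$. This produces a four term exact sequence
\[
0\to K\to\coker\psi\to[\bar\eta]\coker\vphi\to C\to 0,
\]
which I would break at $I:=\img\bar\eta$ into the two short exact sequences $0\to K\to\coker\psi\to I\to 0$ and $0\to I\to\coker\vphi\to C\to 0$.

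Finally I would read off the equivalence. As $C$ is free the second sequence splits, giving $\coker\vphi\isom I\oplus C$, so $\coker\vphi$ is free if and only if $I$ is free. In the first sequence $K$ is free: if $I$ is free the sequence splits and $\coker\psi\isom K\oplus I$ is free; conversely, if $\coker\psi$ is free then $K$ is a free submodule of the free module $\coker\psi$, so the quotient $I=\coker\psi/K$ is free by Lemma~\ref{lem free qoutient module}. Chaining these gives that $\coker\psi$ is free if and only if $I$ is free if and only if $\coker\vphi$ is free, which is the desired conclusion. The one step that is not purely formal is this last one, that freeness of $\coker\psi$ forces freeness of $I$: over a general ring the quotient of a free module by a free submodule need not be free, and it is exactly Lemma~\ref{lem free qoutient module}, special to local Artin rings, that makes the argument go through.
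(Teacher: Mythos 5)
Your proof is correct and follows essentially the same route as the paper: both reduce constant rank to freeness of cokernels via Lemma~\ref{lemma constant rank}, and your two short exact sequences obtained by splitting the four-term sequence at $I=\img\bar\eta$ are exactly the paper's sequences $0\to G/(F\oplus\ker\eta)\to H/\vphi(F)\to H/\eta(G)\to 0$ and $0\to\ker\eta\to G/F\to G/(F\oplus\ker\eta)\to 0$, with Lemma~\ref{lem free qoutient module} invoked at the same (and indeed the only non-formal) step. The only cosmetic difference is that the paper first replaces $F$ by $\img\psi$, whereas you carry $\img\psi$ through the computation explicitly.
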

\begin{proof}
We may assume that $\psi$ is injective since replacing $F$ by $\img\psi$ does not change any cokernel. As $\eta$ has constant rank, $\ker\eta$ and $\coker\eta=H/\eta(G)$ are free. If we consider the two exact sequences
\[
0\to G/(F\oplus\ker\eta) \to H/\vphi(F) \to H/\eta(G) \to 0
\]
and
\[
0\to \ker\eta \to G/F \to G/(F\oplus\ker\eta) \to 0
\]
we see that $H/\vphi(F)$ is free if and only if $G/(F\oplus\ker\eta)$ is free. By Lemma \ref{lem free qoutient module} this is the case if and only if $G/F$ is free. This proves the Lemma.
\end{proof}
\begin{Lem}\label{lem const rank cohomology}
Let $R$ be a local Artin ring and 
\[
H' \stackrel{d_1}{\rightarrow} H \stackrel{d_2}{\rightarrow} H''
\]
a complex of free $R$-modules, i.e. $d_2\circ d_1=0$. If the $d_i$ have constant rank, then the cohomology $\ker d_2/\img d_1$ is free.
\end{Lem}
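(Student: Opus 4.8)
The plan is to deduce the freeness of $\ker d_2/\img d_1$ purely formally from the constant-rank hypotheses, using the characterization of Lemma~\ref{lemma constant rank} (constant rank $\iff$ free cokernel) together with the splitting of short exact sequences whose right-hand term is projective. Two standing facts will be used without further comment: over the local Artin ring $R$ every finitely generated projective module is free, and $R$ being Noetherian, every submodule of a finitely generated module is again finitely generated.

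First I would unwind the two hypotheses. As $d_2$ has constant rank, Lemma~\ref{lemma constant rank} makes $\coker d_2 = H''/\img d_2$ free; splitting the sequence $0 \to \img d_2 \to H'' \to \coker d_2 \to 0$ (possible since the quotient is projective) shows that $\img d_2$, and hence $H/\ker d_2 \cong \img d_2$, is free. As $d_1$ has constant rank, the same lemma makes $\coker d_1 = H/\img d_1$ free. The complex condition $d_2\circ d_1 = 0$ supplies the inclusions $\img d_1 \subset \ker d_2 \subset H$, whence the short exact sequence
\[
0 \to \ker d_2/\img d_1 \to H/\img d_1 \to H/\ker d_2 \to 0.
\]
Here the middle term $H/\img d_1$ is free and the right-hand term $H/\ker d_2 \cong \img d_2$ is free, hence projective, so the sequence splits and $\ker d_2/\img d_1$ is a direct summand of the free module $H/\img d_1$. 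Being finitely generated and projective, it is therefore free, which is the assertion.

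I do not expect a genuine obstacle here: once the constant-rank data are converted into freeness of the relevant cokernels, the remainder is a one-line diagram chase. The only points that require care are bookkeeping—verifying that each splitting is licensed by projectivity of the correct quotient, and that finite generation is in place before invoking \emph{finitely generated projective over a local ring is free}—and all of these are immediate from the setup. As an alternative one could instead feed the data into Lemma~\ref{lem const rank dreieck}, but the direct splitting argument above seems cleanest and entirely self-contained.
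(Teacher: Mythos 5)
Your proof is correct, and it takes a slightly different route from the paper's. The paper first converts the constant-rank hypotheses into freeness of the \emph{submodules} $\img d_1\subset\ker d_2$ of $H$ (via Lemma \ref{lemma constant rank} and splittings) and then quotes Lemma \ref{lem free qoutient module}, the statement that an inclusion of finitely generated free modules over a local \emph{Artin} ring has free quotient --- a fact whose proof genuinely uses the Artin hypothesis through the nonvanishing of $\Ann\gothm$. You instead convert the hypotheses into freeness of the \emph{quotients} $H/\img d_1$ and $H/\ker d_2$ and split the resulting sequence $0\to\ker d_2/\img d_1\to H/\img d_1\to H/\ker d_2\to 0$ off its projective right-hand term. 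Your bookkeeping (finite generation over a noetherian ring, ``finitely generated projective over a local ring is free'') is all in order. What your version buys is generality and self-containedness: it bypasses Lemma \ref{lem free qoutient module} entirely and proves the statement over any noetherian local ring, not just an Artinian one; what the paper's version buys is uniformity, since Lemma \ref{lem free qoutient module} is needed elsewhere (e.g.\ in Lemma \ref{lem const rank dreieck} and Corollary \ref{cor intersection of free}) and is simply reused here.
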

\begin{proof}
Consider the diagram
\[
\begin{xy}
\xymatrix{
H' \ar@{->>}[d] \ar[r]^{d_1} & H \ar[r]^{d_2} & H'' \\
F \ar@{^(->}[ru]^\vphi \ar[r]^\psi & G \ar@{^(->}[u]^\eta &\\
}
\end{xy}
\]
where $F=\img d_1$ and $G=\ker d_2$. Here $F$ and $G$ are free by the remarks following Definition \ref{definition constant rank}, hence the claim follows from Lemma \ref{lem free qoutient module}.
\end{proof}

\end{document}